\begin{document}

\newtheorem{theo}{Theorem}
\newtheorem{lem}{Lemma}
\newtheorem{exam}{Example}
\newtheorem{coro}{Corollary}
\newtheorem{defi}{Definition}
\newtheorem{axio}{I}

\newtheorem{prob}{Problem}
\newtheorem{lemm}{Lemma}
\newtheorem{prop}{Proposition}
\newtheorem{rem}{Remark}
\newtheorem{conj}{Conjecture}

\def\jacob{{\rm jacob}}          
\newcommand\Pn[1]{\mathbb{P}^{#1}}   
\def\Z{\mathbb{Z}}                 
\def\Q{\mathbb{Q}}                   
\def\C{\mathbb{C}}                   
\def\R{\mathbb{R}}                   
\def\N{\mathbb{N}}                   
\def\P{\mathbb{P}}                   
\def\k{{\mathsf k}}                  
\def\K{{\mathsf K}}                  
\def\A{\mathbb{A}}                   
\def\F{{\cal F}}                      
\def\one{\Theta}                 
\def\kernel{{\rm Kernel}}                
\def\image{{\rm Image}}                

\def\sing{{\rm sing}}

\def\O{{\cal O}}                 %

\begin{center}
{\LARGE\bf 
Algebraic curves  and foliations
}

\vspace{.25in} {\large {\sc César Camacho\footnote{EMAp-FGV, Praia de Botafogo 190, 
Rio de Janeiro,  22250-900, RJ, Brazil, {\tt cesar.camacho@fgv.br},}, Hossein Movasati\footnote{IMPA, Estrada Dona Castorina 110, Rio de Janeiro,  22460-320,  RJ,  Brazil,
{\tt hossein@impa.br}, } 
}} \\
\vspace{.1in} {With an appendix by \large {\sc Claus Hertling\footnote{Universit\"at Mannheim, B6, 26, 68159,  Mannheim, Germany, {\tt hertling@math.uni-mannheim.de.}

} }} \\
\begin{abstract}
Consider a field $\k$ of characteristic $0$, not necessarily algebraically closed, and a fixed algebraic curve $f=0$ defined by a tame polynomial $f\in\k[x,y]$ with only quasi-homogeneous singularities. We prove that the space of holomorphic foliations in the plane $\A^2_\k$ having $f=0$ as a fixed invariant curve is generated as $\k[x,y]$-module by at most four elements, three of them are the trivial foliations $fdx,fdy$ and $df$. Our proof is algorithmic and constructs the fourth foliation explicitly. Using Serre's GAGA and Quillen-Suslin theorem, 
we show that for a suitable  field extension $\K$ of $\k$ such a module over $\K[x,y]$ is actually generated by two elements, and therefore, such curves are free divisors in the
sense of K. Saito. After performing Groebner basis for this module, we observe that in many well-known examples, $\K=\k$.
\end{abstract}
\end{center}

\section{Introduction}
The geometric analysis of one dimensional foliations in the complex projective plane $\P^2_\C$, seen from a global perspective, is a wide open field of research. In particular, the study of foliations that admit an algebraic curve as an integral. In \cite{CLS92} foliations whose limit set is an algebraic 
curve with hyperbolic holonomy are characterized as rational pull backs of linear foliations. 
Curves of high degree can be leaves of foliations of lower degree, and there is no general statement explaining this phenomenon. For example 
A. Lins Neto in \cite{LinsNeto2002} considers  a curve which is a union of nine lines supported by two foliations of projective 
degree $4$. Studying the pencil generated by these two foliations he answers questions posed by Poincar\'e and Painlev\'e. \\
In this paper we explore the algebraic aspects of foliations supporting a fixed algebraic invariant curve. Let $\k$ be a subfield of $\C$ and $\Omega_{\A_\k^2}^1$ the space of 1-forms on $\A_\k^2$. An element $\omega\in\Omega_{\A_\k^2}^1$ induces the foliation $\omega=0$ in the  affine variety $\A^2_\k$. Let $f\in \k[x,y]$  be a polynomial  and  $C: f(x,y)=0$ the induced curve in $\A_\k^2$. We assume that $f$ is a tame polynomial. 
\begin{defi}(\cite{mo}, \cite[\S 10.6]{ho13})\rm.
\label{10/11/2020}
A polynomial $f\in \k[x,y]$ is called {\it tame} if the following property 
is satisfied. In the homogeneous decomposition of $f$
$$
f=f_{d}+\cdots+f_2+f_1+f_0,\ 
$$
into degree $i$ homogeneous pieces $f_i$ in the weighted ring $\k[x,y], \ \deg(x)=\alpha_1,\deg(y)=\alpha_2$ for some $\alpha_1,\alpha_2\in\N$, 
the last homogeneous piece $g:=f_{d}$ has finite dimensional 
Milnor vector space $V_g:=\frac{\k[x,y]}{\jacob(g)}$. \\ 
For $\alpha_1=\alpha_2=1$ this is equivalent to say that  $g$ induces $d$ distinct points 
in $\P_{\bar \k}^1$, that is $g=\prod_{i=1}^{d}(x-a_iy)$, where the $a_i\in \bar\k$ are pairwise different. In geometric terms this means that 
the line at infinity $\P ^1_{\bar\k}:=\P ^2_{\bar\k}\backslash \A_{\bar\k} ^2$ intersects the curve induced by $f=0$ in $\P ^2_\k$ transversely.  
\end{defi}
Let 
\begin{equation}
\label{13july2020}
E_f:=\left\{\omega\in \Omega_{\A_\k^2}^1\Big| df\wedge \omega=f\alpha,\ \hbox{ for some } \alpha\in \Omega_{\A_\k^2}^2\right \}.
\end{equation}
This is the $\k[x,y]$-module of differential $1$-forms $\omega$ such that the foliation induced by $\omega=0$ in $\A^2_\k$  leaves $f=0$ invariant. 
By degree of $\omega=Pdx+Qdy$ we mean the affine degree $\deg(\omega):={\rm max} \{\deg(P),\deg(Q)\}$. 
In this article we prove that:
\begin{theo}
\label{main1}
 If all the singularities of $f=0$ are  quasi-homogeneous  then  
 there exists $\omega_f\in E_f$ 
 such that
$fdx,fdy, df, \omega_f$ generate the $\k[x,y]$-module $E_f$. 
\end{theo}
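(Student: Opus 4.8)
The plan is to turn membership in $E_f$ into a divisibility condition and then reduce the entire statement to a single question about a colon ideal. Writing $\omega=Pdx+Qdy$ and $f_x=\partial f/\partial x$, $f_y=\partial f/\partial y$, one has $df\wedge\omega=(f_xQ-f_yP)\,dx\wedge dy$, so $\omega\in E_f$ exactly when $f_xQ-f_yP\in(f)$. I would introduce the $\k[x,y]$-linear map $\Psi\colon E_f\to\k[x,y]$ sending $\omega$ to the unique $g$ with $f_xQ-f_yP=fg$ (unique because $f$ is not a zero divisor). Tameness forces the critical locus $\{f_x=f_y=0\}$ to be finite, whence $\gcd(f_x,f_y)=1$ and $A:=\k[x,y]/\jacob(f)$ is finite-dimensional over $\k$, where $\jacob(f)=(f_x,f_y)$. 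A direct computation then gives $\kernel(\Psi)=\k[x,y]\,df$ and $\image(\Psi)=(\jacob(f):f):=\{g:fg\in\jacob(f)\}$, producing the short exact sequence
\[
0\longrightarrow \k[x,y]\,df\longrightarrow E_f\stackrel{\Psi}{\longrightarrow}(\jacob(f):f)\longrightarrow 0 .
\]
Since $\Psi(fdy)=f_x$ and $\Psi(fdx)=-f_y$, the three trivial forms $fdx,fdy,df$ already account for $df$ and for the generators $f_x,f_y$ of $\jacob(f)$. The whole theorem thus reduces to exhibiting one element $g_0\in(\jacob(f):f)$ whose class generates $(\jacob(f):f)/\jacob(f)$; equivalently, to showing that this quotient is a cyclic $\k[x,y]$-module.

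To control that quotient I would multiply by $f$ on the Milnor algebra $A$, which is finite over $\k$. The map $\cdot f\colon A\to A$ is $\k[x,y]$-linear, and its kernel is precisely $(\jacob(f):f)/\jacob(f)$. Being Artinian, $A$ splits as a finite product $A=\prod_iA_i$ of local $\k$-algebras indexed by the closed points (Galois orbits of critical points) of $f$, and $\cdot f$ acts factorwise. Here the quasi-homogeneity hypothesis is decisive. At a singular point of $f=0$ the local Euler relation yields $f\in(f_x,f_y)$ locally, so $f$ vanishes in the corresponding factor and $\cdot f$ is identically zero there; at a critical point off the curve $f$ is a unit, so $\cdot f$ is an isomorphism and contributes nothing. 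Hence $\kernel(\cdot f)=eA$, where $e\in A$ is the idempotent supported on the singular points of the curve. This $e$ is Galois-invariant, hence lies in $A$ over $\k$, so $eA$ is cyclic, generated by $e$; choosing any lift $g_0\in\k[x,y]$ of $e$ gives $(\jacob(f):f)=(f_x,f_y,g_0)$.

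It finally remains to realise $g_0$ by an explicit form. As $g_0\in(\jacob(f):f)$, I would write $fg_0=Af_x+Bf_y$ and set $\omega_f:=-B\,dx+A\,dy$, so that $\Psi(\omega_f)=g_0$ and $\omega_f\in E_f$. For an arbitrary $\omega\in E_f$, decomposing $\Psi(\omega)=af_x+bf_y+cg_0$ and subtracting $a\,fdy-b\,fdx+c\,\omega_f$ lands in $\kernel(\Psi)=\k[x,y]\,df$; therefore $fdx,fdy,df,\omega_f$ generate $E_f$. The procedure is algorithmic: once a Groebner basis of $\jacob(f)$ yields the idempotent $e$ and the expression $fg_0=Af_x+Bf_y$, the form $\omega_f$ is written down explicitly. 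I expect the cyclicity of $(\jacob(f):f)/\jacob(f)$ to be the main obstacle, and it is exactly where quasi-homogeneity cannot be dispensed with: at a non-quasi-homogeneous singularity $\cdot f$ is nilpotent but nonzero on the local factor, so the local kernel is a proper submodule and the global kernel may fail to be cyclic. The complication of a non-algebraically-closed $\k$ is absorbed by the Galois-invariance of the idempotent $e$.
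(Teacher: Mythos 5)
Your proposal is correct, and its skeleton runs parallel to the paper's: both arguments reduce the theorem to showing that the kernel of multiplication by $f$ on the Milnor algebra $V_f=\k[x,y]/\jacob(f)$ is a cyclic $V_f$-module, and both finish identically once a generator is found (your exact sequence $0\to\k[x,y]\,df\to E_f\to(\jacob(f):f)\to 0$ repackages the paper's observation that $df\wedge\omega=f\Theta_1 dx\wedge dy$ forces $\Theta_1\in\ker(A_f)$, and your $\gcd(f_x,f_y)=1$ computation is exactly the de Rham lemma the paper invokes). Where you genuinely diverge is the cyclicity step. The paper takes the minimal polynomial $p(t)=tq(t)$ of $A_f$, sets $\Theta_f:=q(f)$, and proves $\Theta_f\cdot V_f=\ker(A_f)$ by a Jordan-block argument (Theorem \ref{5feb07}, parts 2 and 3, via the decomposition of Proposition \ref{kimurasan}); quasi-homogeneity enters only to force all Jordan blocks at the eigenvalue $0$ to have size $1$. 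You instead split $V_f$ as an Artinian product of local factors and use Saito's criterion $f\in\jacob(f)$ to conclude that multiplication by $f$ is identically zero on the factors at singular points of the curve and invertible on the others, so $\ker(A_f)=eV_f$ for the corresponding idempotent $e$. The two constructions agree up to a unit: $q(A_f)$ equals $q(0)\cdot\mathrm{id}$ (with $q(0)\neq 0$) on the factors where $f$ acts as zero and vanishes on the rest, so $\Theta_f=q(0)\,e$ in $V_f$. What the paper's route buys is generality and computability: part 2 of Theorem \ref{5feb07} only needs the Jordan blocks at $0$ to have equal size (the weaker hypothesis that Hertling's appendix addresses), and $q(f)$ is produced directly from the minimal polynomial, which is how $\omega_f$ is computed in practice. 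What your route buys is conceptual economy: the role of quasi-homogeneity is completely transparent, and it explains at once why the A'Campo example fails, since there $A_f$ is nilpotent but nonzero on a local factor, so the kernel is no longer generated by an idempotent and indeed fails to be cyclic.
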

The first three foliations $fdx,fdy,df$ are obviously in $E_f$ and we call them trivial foliations. Our proof of Theorem \ref{13july2020}
is algorithmic and it computes $\omega_f$. The proof of Theorem \ref{main1} only works for curves with quasi-homogeneous singularities. This follows from an argument due to C. Hertling, see Appendix \ref{24/11/2020}.
The curve $x^5+y^5-x^2y^2=0$ (due to A'Campo) has a non quasi-homogeneous singularity at the origin and it turns out that the conclusion of Theorem \ref{main1} is false in this case. For further details see Example \ref{5nov2020}.  

We have  implemented the computation of $\omega_f$  in a computer. This $1$-form is usually of high degree 
and it is not clear how to produce foliations of lowest possible degree in $E_f$. In order to investigate this problem,  we write $E_f$ in the standard Groebner 
basis and take its minimal resolution. It turns out that in many interesting examples, $E_f$ is actually generated by two foliations  $\omega_0:=P_0dx+Q_0dy$ and 
$\omega_\infty:=P_\infty dx+Q_\infty dy$. This includes the curves studied by Lins Neto,  Mendes-Pereira, and graphs of polynomial functions. For this we have prepared the Table  \ref{30june2020}, for more details regarding this table 
see \S \ref{lopes2020}.

For smooth curves it is easy to see that the foliations given by 
$fdx,fdy,df$ generate $E_f$, see Proposition \ref{27oct2020}.  For curves with only quasi-homogeneous singularities (including smooth curves) 
Quillen-Suslin theorem implies that $E_f\otimes_\k \K$ is actually generated by two foliations $\omega_0$ and $\omega_\infty$, for some  
field extension $\K$ of $\k$. 
Computing $\omega_0$ and $\omega_\infty$, and in particular the field extension  $\K$  seems to be a new problem 
not treated in the literature.
The arguments used in the proof of Theorem \ref{main1} are essentially valid in higher dimensions $\A^n_\k$. However, in this paper we are interested in holomorphic foliations, that is, in those $\omega\in E_f$ such that the integrability condition $\omega\wedge d\omega = 0$ holds. This trivially holds for $n=2$, but for $n>2$ it is a nonlinear identity in $\omega$ and it is not clear how to interpret results like Theorem \ref{main1} in this case.

The article is organized in the following way. In \S\ref{5/11/2020-1} we introduce a basis of monomials for the Milnor
vector space of $f$. In \S\ref{5/11/2020-2} we recall quasi-homogeneous singularities and apply K. Saito's  theorem to these singularities in order 
to get a local freeness statement. We prove Theorem \ref{main1} in  \S\ref{5/11/2020-3} and observe that for a curve with A'Campo singularity 
Theorem \ref{main1} does not hold. The examples presented in Table \ref{30june2020} and even more, are discussed in 
\S\ref{lopes2020}.  In \S\ref{5/11/2020-4} we use Quillen-Suslin theorem and Serre's GAGA in order to prove that $E_f\otimes_\k \K$ is actually free for some 
field extension $\k\subset \K$. 
We discuss this field extension in the case of a circle. The computer codes of the present paper are written in {\sc Singular}, see \cite{GPS01}. 
For the computations in this paper we have written the procedures
{\tt SyzFol, MinFol, BadPrV} of {\tt foliation.lib} which are available on the second author's webpage.
\footnote{ {\tt http://w3.impa.br/$\sim$hossein/foliation-allversions/foliation.lib}  }

Our sincere thanks go to J. V. Pereira for many useful discussions related to the topic of the present paper. We thank also C. Hertling 
for his comments and corrections to the present article and for writing Appendix \ref{24/11/2020}.  

\begin{table}
\label{30june2020}
\begin{center}
\begin{tabular}{|c|c|}

\hline
Name
\thead{
$f(x,y)=0$
\\
$\left(
\begin{array}{*{2}{c}}
P_0 & Q_0  \\
P_{\infty}&  Q_{\infty}
\end{array}
\right)
$
}
&
Figure  
\\

\hline
\hline
Riccati 
\thead{
$f(x)$
\\
$\left(
\begin{array}{*{2}{c}}
1 & 0  \\
0&  f(x)
\end{array}
\right)
$
}
&
\thead{
\includegraphics[width=0.1\textwidth]{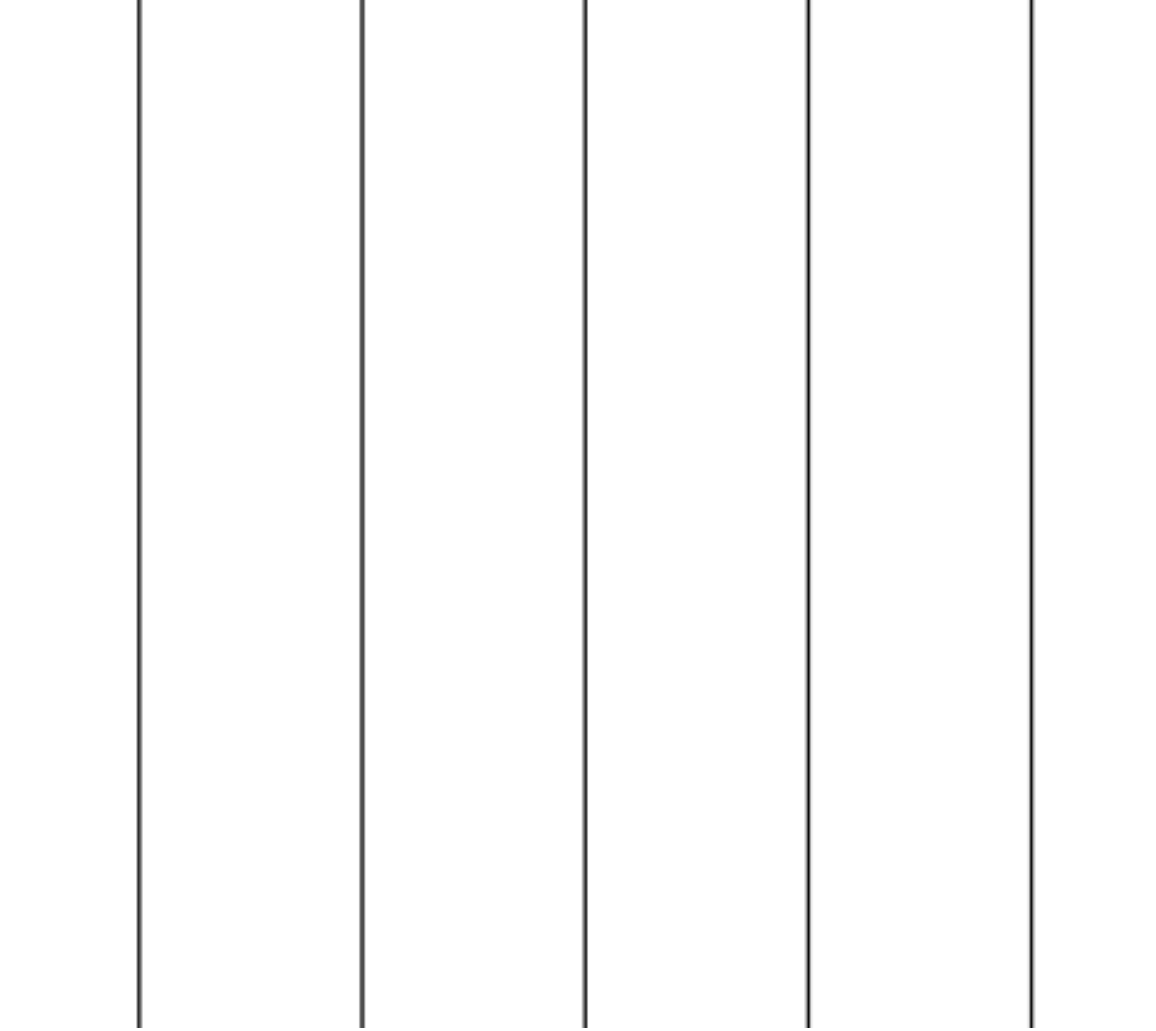}} 
\\

\hline
 Quasi-homogeneous
 \thead{
$f(x,y)$
\\
$\left(
\begin{array}{*{2}{c}}
f_x & f_y\\
-\frac{\alpha_2}{d} y &   \frac{\alpha_1}{d} x
\end{array}
\right)
$
}
&
\thead{
\includegraphics[width=0.1\textwidth]{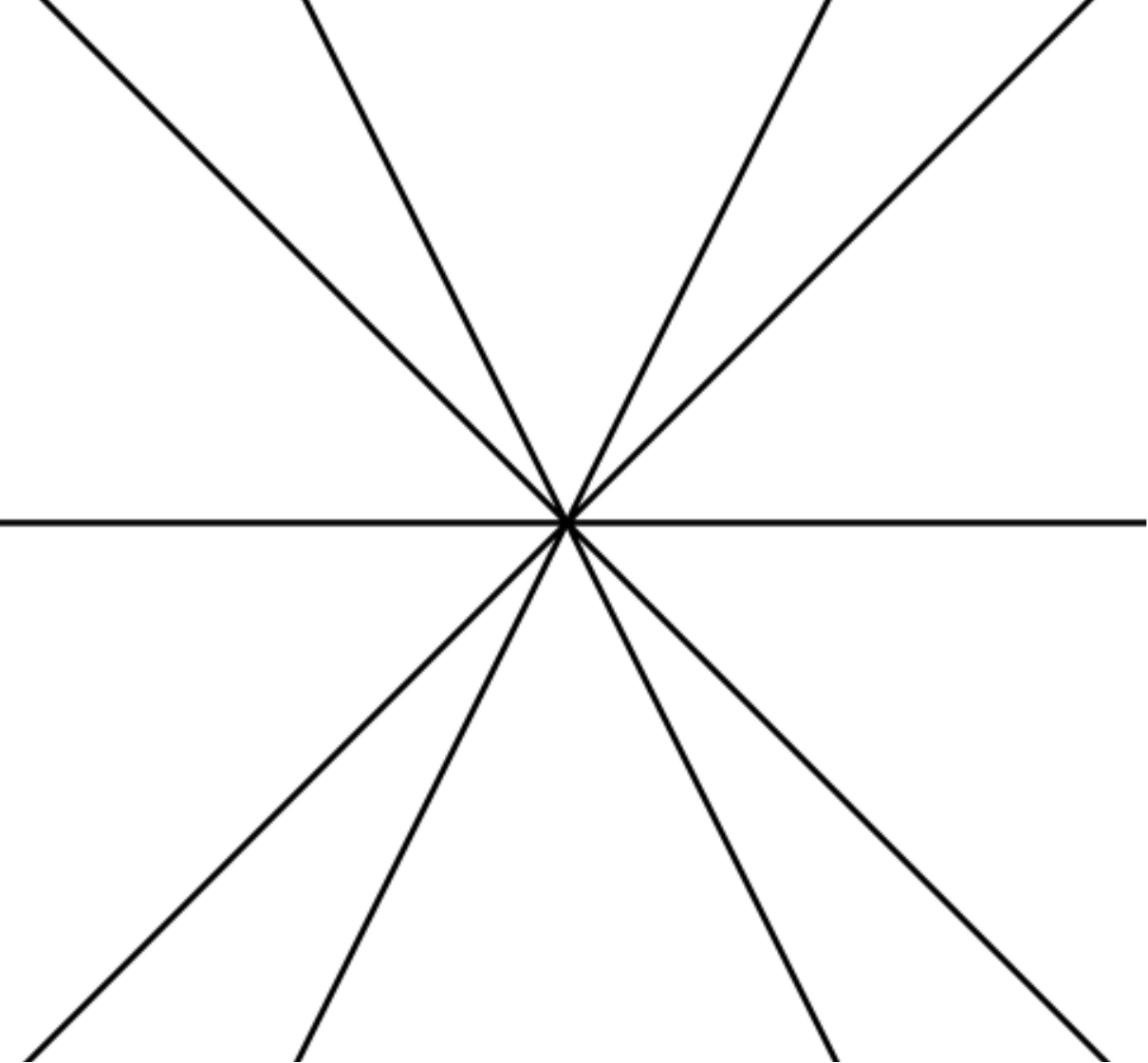}}

\\

\hline
Graph 
\thead{
$y-f(x)$
\\
$\left(
\begin{array}{*{2}{c}}
ya_1(x)+a_2(x) & a_{3}(x)  \\
yb_1(x)+b_2(x) & b_{3}(x)
\end{array}
\right)
$
}
&
\thead{
\includegraphics[width=0.1\textwidth]{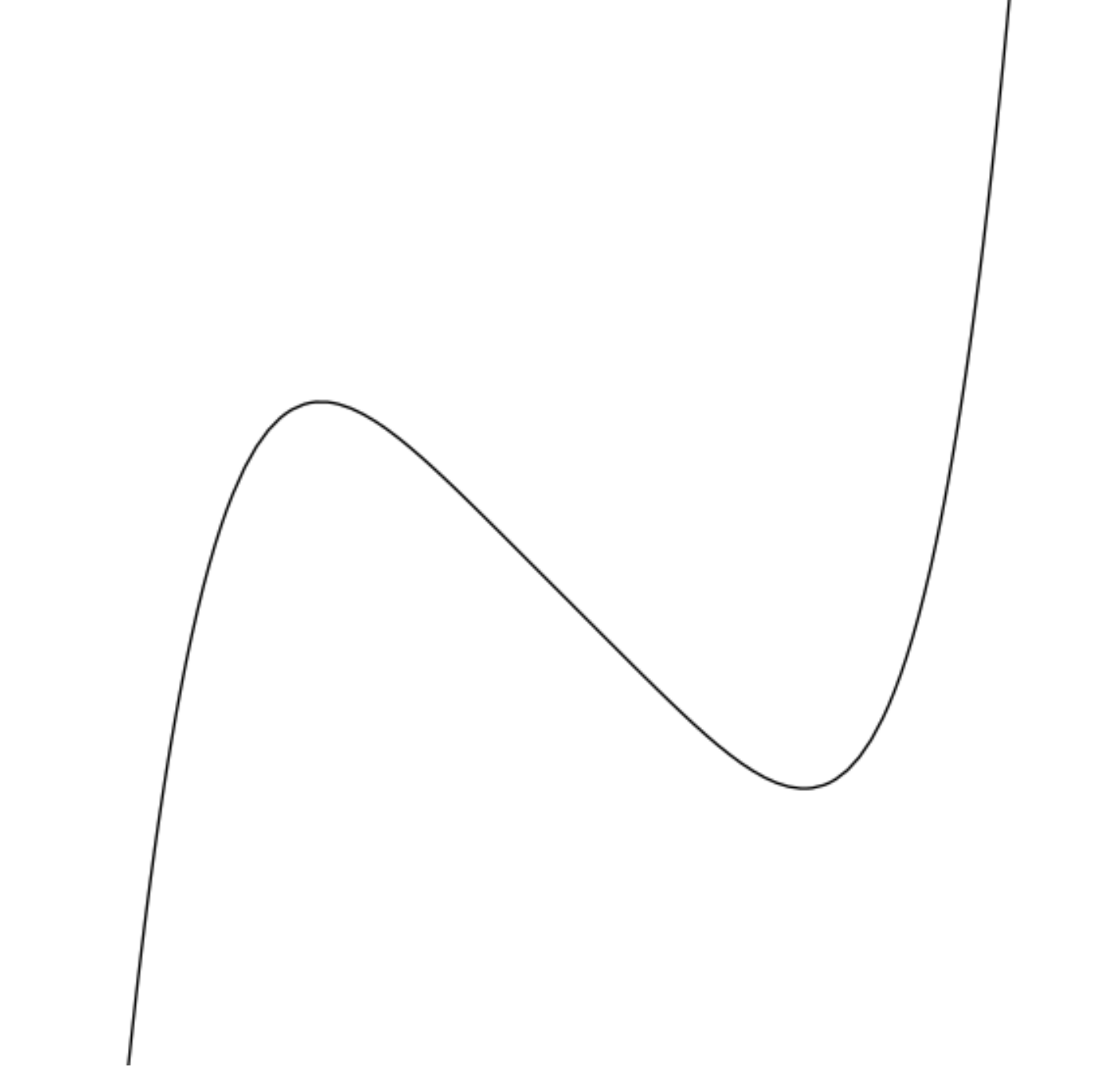} } 
\\

\hline

Lins Neto ($a=3$)
\thead{
$(x^a-1)(y^a-1)(x^a-y^a)$
\\
$\left(
\begin{array}{*{2}{c}}
 -y^{a+1}+y &  x^{a+1}-x \\
 -x^{a-1}(y^a-1)  & y^{a-1}(x^a-1)  
\end{array}
\right)
$
}

&
\thead{
\includegraphics[width=0.1\textwidth]{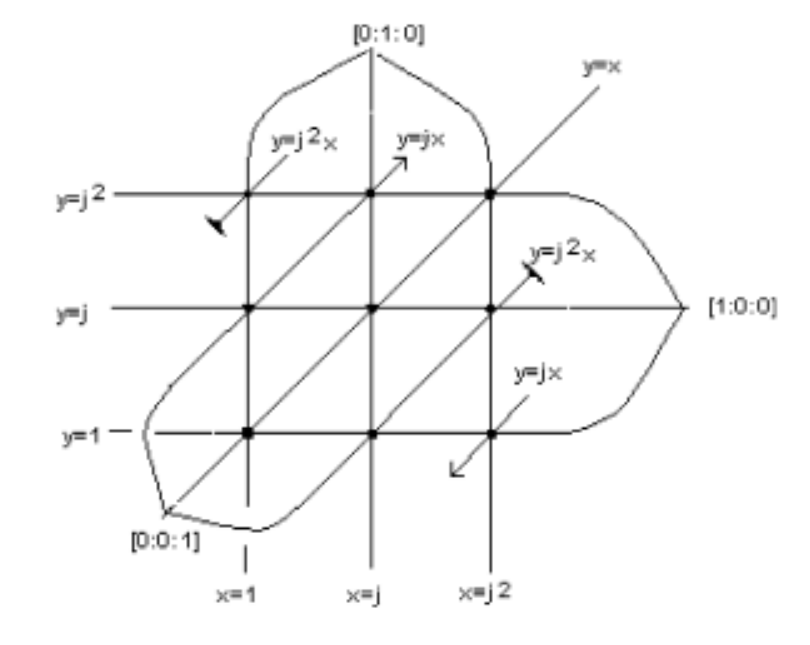}\\ $ (x^3-1)(y^3-1)(x^3-y^3)=0$ } 
\\ \hline


Rose $k=\frac{1}{2}$
 \thead{
{\tiny $4x^4 + 8x^2y^2 + 4y^4- 4x^6 - 12x^4y^2 - 12x^2y^4 - 4y^6 - y^2$}
\\
{\tiny
$
\left(
\begin{array}{*{2}{c}}
8\cdot x^{3}-xy^{2} & 11\cdot x^{2}y+2\cdot y^{3}-y \\
5\cdot x^{2}y-4\cdot y^{3}+2\cdot y & x^{3}+10\cdot xy^{2}-x
\end{array}
\right)
$
}
}
&
\thead{
\includegraphics[width=0.1\textwidth]{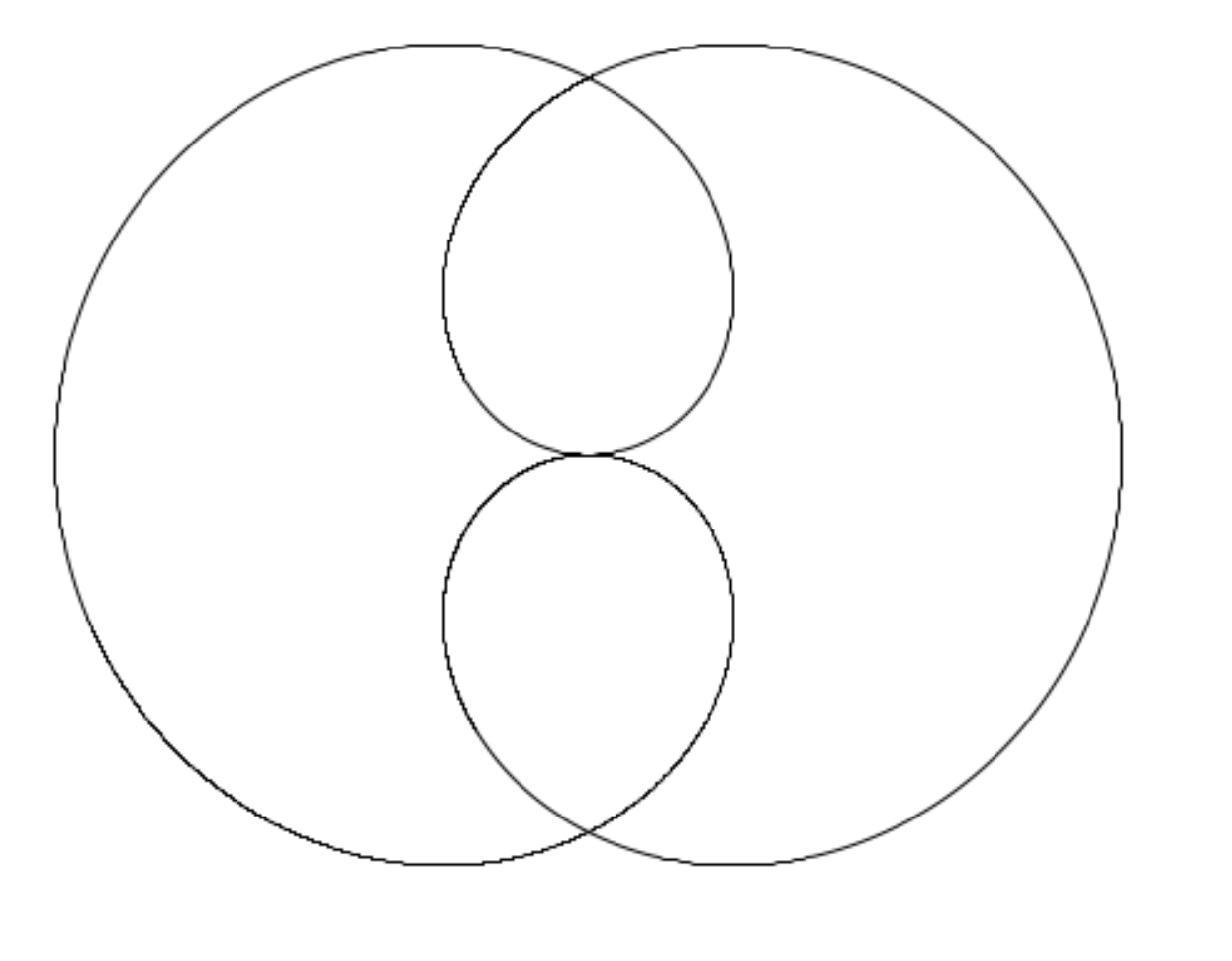}
} 
\\ \hline
\href{https://mathcurve.com/courbes2d/lissajous/lissajous.shtml}{Lissajous}
\thead{
$2(2x^2-a^2)^2-a(2y-a)^2(y+a)$ \\
$x=a \cos(3t), y=a \cos(4t)$
\\
{\tiny
$
\left(
\begin{array}{*{2}{c}}
-16\cdot xy^{2}-8a\cdot xy+8a^{2}\cdot x & 12\cdot x^{2}y+6a\cdot x^{2}-6a^{2}\cdot y-3a^{3} \\
-16\cdot x^{2}y-8a\cdot y^{2}+4a^{2}\cdot y+4a^{3} & 12\cdot x^{3}+6a\cdot xy-9a^{2}\cdot x
\end{array}
\right)
$
}
}
&
\thead{
\includegraphics[width=0.1\textwidth]{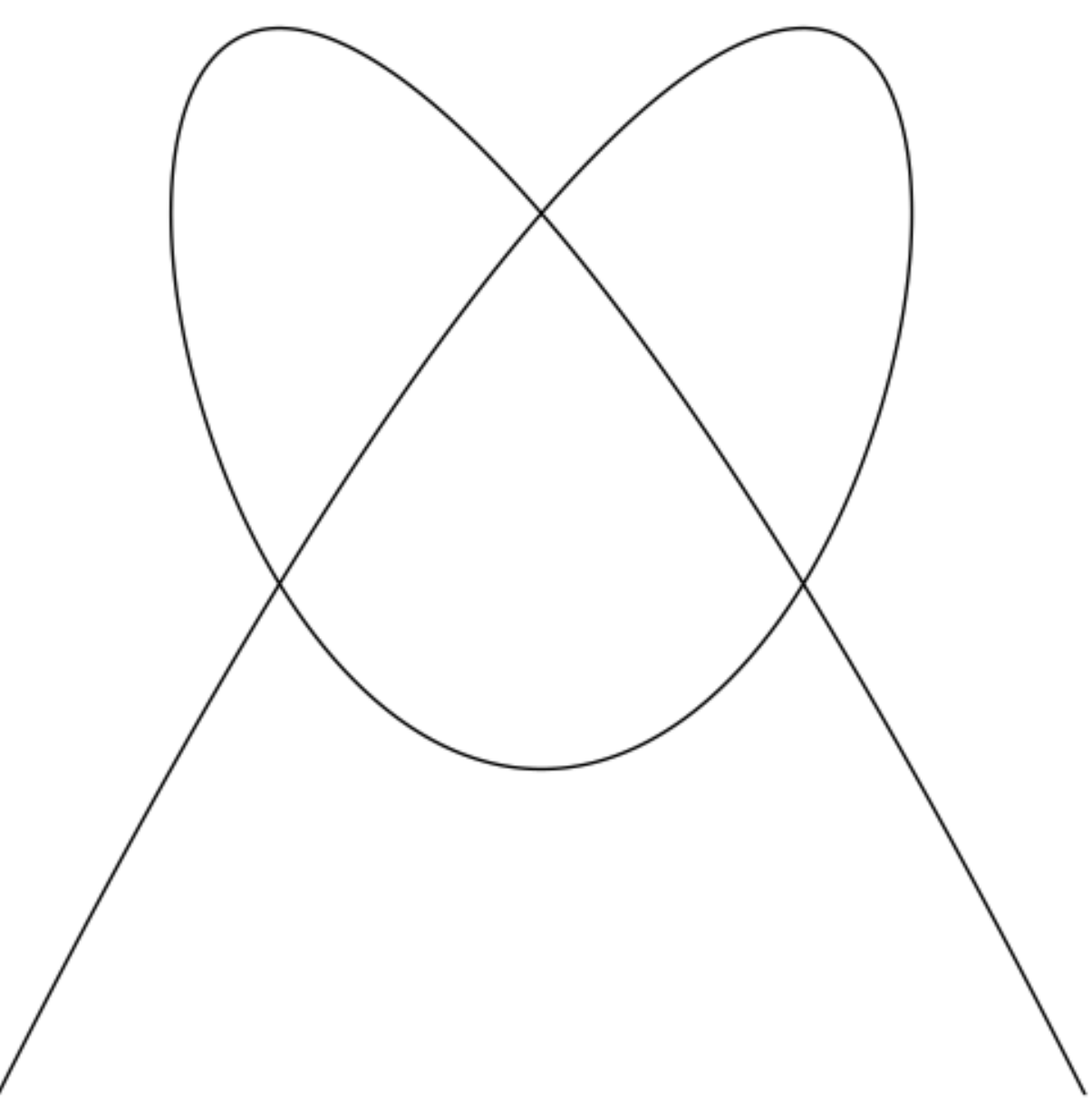}\\ $a=-1$
} 
\\ \hline

\href{https://mathcurve.com/courbes2d.gb/deltoid/deltoid.shtml}{Deltoid}
\thead{
$(x^2+y^2)^2+8ax(x^2-3y^2)+18a^2(x^2+y^2)-27a^4$
\\
{\tiny
$
\left(
\begin{array}{*{2}{c}}
x^{2}-3\cdot y^{2}+6a\cdot x+9a^{2} & 4\cdot xy-6a\cdot y \\
-4\cdot xy+6a\cdot y & 3\cdot x^{2}-y^{2}+6a\cdot x-9a^{2}
\end{array}
\right)
$
}
}
&
\thead{
\includegraphics[width=0.1\textwidth]{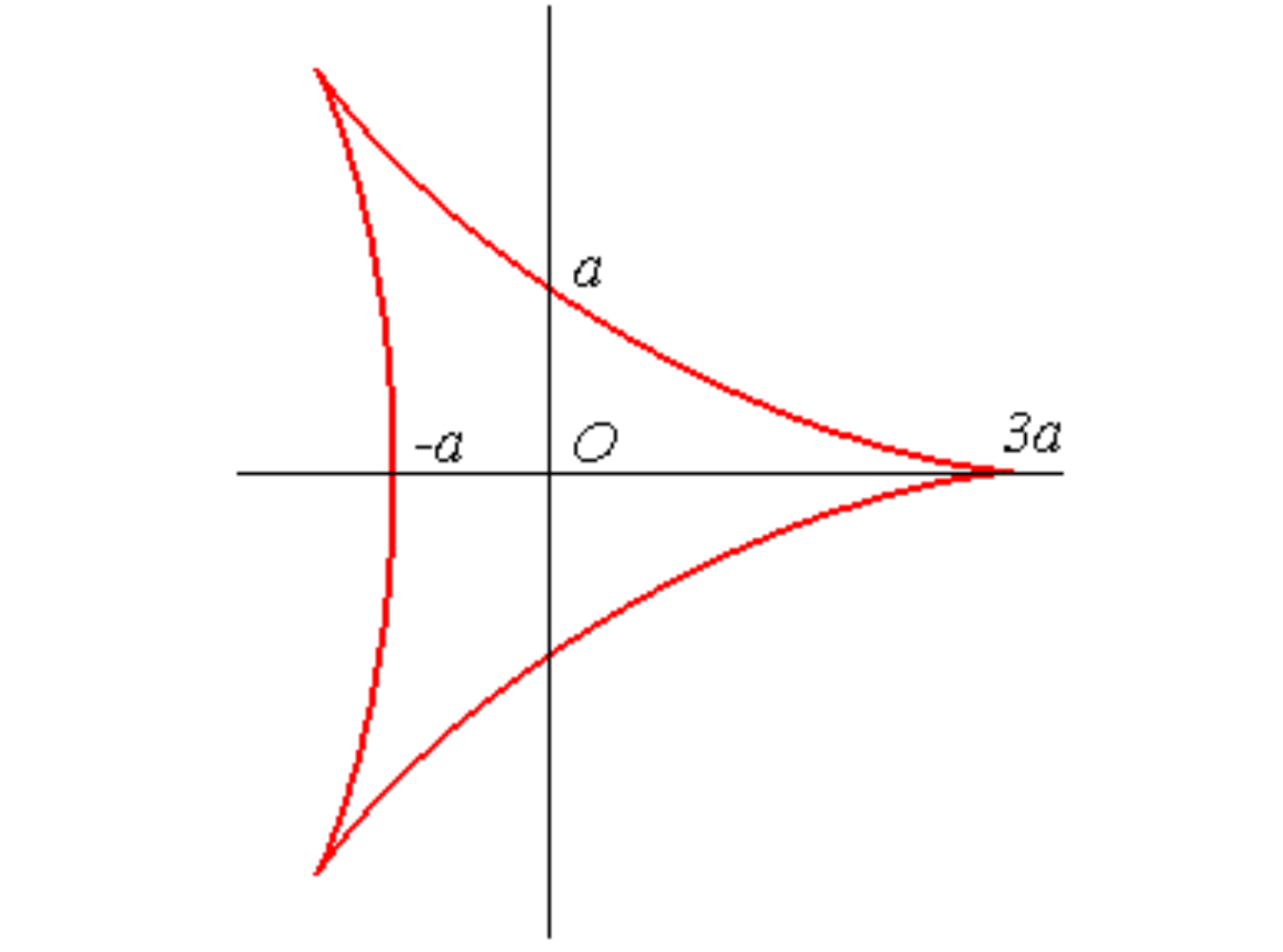}
} 
\\ \hline

\end{tabular}
\end{center}
\caption{Curves and foliations}
\end{table}

\section{Milnor vector space}
\label{5/11/2020-1}
Recall the definition of a tame polynomial in Introduction. 
In the following by degree of a polynomial we mean the  weighted 
degree in $\k[x,y]$, $\deg(x)=\alpha_1,\ \deg(y)=\alpha_2$.
\begin{prop}
\label{alireza}
Let $f\in\k[x,y]$ be  tame polynomial of weighted degree $d$ in $\k[x,y]$, $\deg(x)=\alpha_1,\ \deg(y)=\alpha_2$. 
There is a basis of monomials $x^iy^j$ for the
Milnor vector space
$$
V_f:=\frac{\k[x,y]}{\jacob(f)}\cong \frac{\Omega^2_{\A_\k^2}}{df\wedge \Omega^1_{\A_\k^2}}
$$
with weighted degree $\leq 2d-2\alpha_1-2\alpha_2$, and among these monomials, only one  monomial is of the highest usual degree $2d-2\alpha_1-2\alpha_2$.
\end{prop}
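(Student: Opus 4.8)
The plan is to reduce the statement to the weighted-homogeneous case by passing to the top form $g:=f_{d}$, and then to transport a monomial basis of $V_g$ to $V_f$ by a standard-basis argument. Since $f$ is tame, $V_g=\k[x,y]/\jacob(g)$ is finite-dimensional, which is equivalent to saying that $g$ has an isolated singularity at the origin, i.e. that the partials $g_x,g_y$ form a regular sequence in $\k[x,y]$. Because $g$ is weighted-homogeneous of weighted degree $d$, the partials $g_x,g_y$ are weighted-homogeneous of weighted degrees $d-\alpha_1$ and $d-\alpha_2$, so $V_g$ is a graded Artinian complete intersection.

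Next I would read off both conclusions for $g$ from its Poincar\'e (Hilbert) series with respect to the weighted grading. For a complete intersection cut out by two weighted-homogeneous forms of weighted degrees $d-\alpha_1$ and $d-\alpha_2$ inside a ring with generators of weighted degrees $\alpha_1,\alpha_2$, this series is
$$
P(t)=\frac{(1-t^{\,d-\alpha_1})(1-t^{\,d-\alpha_2})}{(1-t^{\alpha_1})(1-t^{\alpha_2})}.
$$
Finiteness of $V_g$ forces $P(t)$ to be a polynomial with non-negative integer coefficients; it is palindromic (the algebra is Gorenstein), its lowest term is the constant $1$, and its degree is $(d-\alpha_1)+(d-\alpha_2)-\alpha_1-\alpha_2=2d-2\alpha_1-2\alpha_2$ with leading coefficient $1$. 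Thus every graded piece of $V_g$ in weighted degree exceeding $2d-2\alpha_1-2\alpha_2$ vanishes, and the top graded piece is one-dimensional. To obtain an actual monomial basis I would fix a monomial order refining the weighted grading, compute a standard basis of $\jacob(g)$, and take the monomials outside the leading-term ideal; their weighted-degree distribution reproduces $P(t)$, so exactly one of them lies in weighted degree $2d-2\alpha_1-2\alpha_2$.

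Finally I would transfer this basis to $V_f$. The key point is that the leading form (in weighted degree) of $f_x$, respectively $f_y$, is exactly $g_x$, respectively $g_y$: differentiation in $x$ (resp. $y$) drops weighted degree by $\alpha_1$ (resp. $\alpha_2$), and the homogeneous pieces $f_{d-1}+\cdots+f_0$ below $g$ contribute only terms of strictly smaller weighted degree. Hence, for a monomial order refining the weighted grading, the leading terms of $f_x,f_y$ coincide with those of $g_x,g_y$. Since $g_x,g_y$ form a regular sequence, a flatness/standard-basis argument shows that the initial ideal of $\jacob(f)$ equals that of $\jacob(g)$, so the very same monomials form a basis of $V_f$, with the identical weighted-degree bound and the identical uniqueness of the top-degree monomial. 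The isomorphism $V_f\cong \Omega^2_{\A_\k^2}/(df\wedge\Omega^1_{\A_\k^2})$ is the standard identification sending the class of $h$ to the class of $h\,dx\wedge dy$.

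I expect the main obstacle to be this last transfer step: one must argue rigorously that the lower-weight tails of $f_x,f_y$ neither create leading monomials outside the initial ideal of $\jacob(g)$ nor shrink the quotient. This is precisely where tameness is indispensable, since it makes $g_x,g_y$ a regular sequence, so that the deformation from $g$ to $f$ is flat and the dimension count—hence the monomial basis and the one-dimensionality of the top weighted-degree piece—is preserved.
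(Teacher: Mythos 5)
Your proposal is correct and follows the same two-step route as the paper's proof: first establish the statement for the weighted-homogeneous top form $g=f_d$, then transfer the monomial basis of $V_g$ to $V_f$ using the fact that tameness makes $g_x,g_y$ a regular sequence. The only difference is one of self-containedness: where the paper simply cites Arnold--Saito (\cite{arn1}, Corollary 4, p.\ 200) for the graded case and Movasati (\cite{mo}, \S 6, or \cite{ho13}, Proposition 10.7) for the transfer, you reconstruct those proofs — the Koszul/Hilbert-series computation for the Artinian complete intersection $V_g$, and the standard-basis (initial-ideal) argument showing $f_x,f_y$ have the same leading-term ideal as $g_x,g_y$ — which is exactly the content of the cited results.
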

\begin{proof}
The proposition for $f=g$ a homogeneous polynomial in a weighted ring $\k[x,y]$, $\deg(x)=\alpha_1,\ \deg(y)=\alpha_2$ is a classical fact due to 
V. Arnold and K. Saito, see \cite[Corollary 4, page 200]{arn1}. Actually in this case, all the 
monomials of usual degree $> 2d-2\alpha_1-2\alpha_2$ are zero in $V_g$.  
For an arbitrary $f$ with the last  homogeneous piece $g$ in the same weighted ring, we first find a monomial basis with the desired property for   
$V_g$ and it follows that the same set of monomials form a basis of $V_f$, see \cite[\S 6]{mo} or 
\cite[Proposition 10.7]{ho13}. In this context, monomials $x^iy^j$ with $i\alpha_1+j\alpha_2>2d-2\alpha_1-2\alpha_2$ are not necessarily zero in $V_f$ and we only know that they are equivalent in $V_f$ to polynomials of  degree $\leq 2d-2\alpha_1-2\alpha_2$.
 \end{proof}
\section{Quasi-homogeneous singularities}
\label{5/11/2020-2}
Let $f=0$ be a germ of curve singularity given by a germ of a holomorphic function  $f\in\O_{\C^2,0}$. 
\begin{theo}[K. Saito \cite{KSaito1980} page 270]
\label{2nov2020}
The $\O_{\C^2,p}$-module of holomorphic $1$-forms tangent to $f=0$ is freely generated if and only if it has two elements 
$\omega_0,\omega_\infty$ such that $\omega_0\wedge \omega_\infty=f dx\wedge dy$. 
\end{theo}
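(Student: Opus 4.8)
The plan is to work in the regular local ring $\O:=\O_{\C^2,p}$, which is a unique factorization domain, and to describe the module in question concretely. Writing a germ of a $1$-form as $\omega=P\,dx+Q\,dy$, the tangency condition $df\wedge\omega\in f\,\Omega^2_{\C^2,p}$ reads $f_xQ-f_yP\in(f)$; call the module of all such $\omega$ simply $T$. It contains the trivial forms $f\,dx$, $f\,dy$ and $df$, and after localizing at $(0)$ (where $f$ becomes a unit and the tangency condition is vacuous) it has rank $2$, so ``freely generated'' will mean free of rank $2$. The technical heart of both implications is the following \emph{wedge lemma}: for any two tangent forms $\eta_1=P_1dx+Q_1dy$ and $\eta_2=P_2dx+Q_2dy$ one has $\eta_1\wedge\eta_2\in f\,\Omega^2_{\C^2,p}$. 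To prove it I would set $g:=P_1Q_2-P_2Q_1$ and, using $f_xQ_i\equiv f_yP_i\pmod f$, verify by a short substitution that both $f_x\,g$ and $f_y\,g$ lie in $(f)$. Since $f$ is reduced, the germ $f=0$ is smooth at the generic point of each branch, so $f$, $f_x$, $f_y$ have no common non-unit factor in the UFD $\O$; hence $f\mid g$, which is exactly the claim.

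For the ``if'' direction, suppose $\omega_0,\omega_\infty\in T$ satisfy $\omega_0\wedge\omega_\infty=f\,dx\wedge dy$. They are $\O$-linearly independent because their wedge is the nonzero element $f\,dx\wedge dy$. Given an arbitrary $\omega\in T$, Cramer's rule over the fraction field forces $\omega=\lambda\omega_0+\mu\omega_\infty$ with
$$
\lambda=\frac{\omega\wedge\omega_\infty}{f\,dx\wedge dy},\qquad \mu=\frac{\omega_0\wedge\omega}{f\,dx\wedge dy}.
$$
By the wedge lemma the numerators $\omega\wedge\omega_\infty$ and $\omega_0\wedge\omega$ already lie in $f\,\Omega^2_{\C^2,p}$, so $\lambda,\mu\in\O$ and $\omega\in\O\omega_0+\O\omega_\infty$. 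Thus $\omega_0,\omega_\infty$ is a free basis of $T$.

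For the ``only if'' direction, assume $T$ is free with basis $\omega_0,\omega_\infty$. The wedge lemma gives $\omega_0\wedge\omega_\infty=u\,f\,dx\wedge dy$ for some $u\in\O$, and I must show $u$ is a unit. Here I would exploit the trivial forms: wedging $f\,dx,\,f\,dy,\,df$ pairwise yields $f\,dx\wedge df=f\,f_y\,dx\wedge dy$, $\ f\,dy\wedge df=-f\,f_x\,dx\wedge dy$ and $f\,dx\wedge f\,dy=f^2\,dx\wedge dy$, so the image ideal of $\bigwedge^2 T\to\Omega^2_{\C^2,p}\cong\O$ contains $f\cdot(f,f_x,f_y)$. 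Since $T$ is free of rank $2$, this image ideal is the principal ideal $(uf)$; comparing, $f\cdot(f,f_x,f_y)\subseteq(uf)$, and cancelling $f$ in the domain $\O$ gives $(f,f_x,f_y)\subseteq(u)$. As $f$ is reduced, $\gcd(f,f_x,f_y)$ is a unit, forcing $u$ to be a unit. Replacing $\omega_0$ by $u^{-1}\omega_0$ then yields $\omega_0\wedge\omega_\infty=f\,dx\wedge dy$.

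The main obstacle, and the only place where the hypotheses genuinely enter, is the passage from ``$f_x g,\,f_y g\in(f)$'' (respectively ``$(f,f_x,f_y)\subseteq(u)$'') to the divisibility $f\mid g$ (respectively $u$ a unit): both rest on the fact that a reduced plane curve germ has an isolated singularity, so that $f$ shares no non-unit factor with the ideal $(f_x,f_y)$. Once this reducedness input is in place, everything else is linear algebra over the UFD $\O$ carried out through the wedge lemma and Cramer's rule.
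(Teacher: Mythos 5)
Your proof is correct. Note that the paper contains no proof of this statement to compare against: it is imported as K.~Saito's theorem (\cite{KSaito1980}, page 270). What you have written is the standard proof of Saito's criterion specialized to two variables, and in substance it is Saito's own argument: the wedge lemma (the wedge of any two tangent forms is divisible by $f$) combined with Cramer's rule over the fraction field gives the ``if'' direction, and computing the ideal of pairwise wedges of the trivial forms $f\,dx$, $f\,dy$, $df$, then cancelling $f$, gives the ``only if'' direction. Two small remarks. First, in the wedge lemma the step from $f\mid f_x g$ and $f\mid f_y g$ to $f\mid g$ should be run one irreducible factor $h$ of $f$ at a time: reducedness guarantees $h$ cannot divide both $f_x$ and $f_y$, so $h\mid g$, and squarefreeness of $f$ then yields $f\mid g$; this is exactly the fact you invoke, just slightly compressed in your write-up. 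Second, you rightly emphasize that reducedness of $f$ (implicit in the paper's phrase ``germ of curve singularity'') is essential and not cosmetic: for $f=x^2$ the tangent module is free with basis $dx$, $x\,dy$, while the elements $x\,dx$ and $x\,dy$ satisfy $x\,dx\wedge x\,dy=f\,dx\wedge dy$ yet do not generate, so the criterion as a criterion fails for non-reduced germs.
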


Quasi-homogeneous singularities are the main example of singularities satisfying Theorem \ref{2nov2020}.  
\begin{defi}\rm
\label{10nov2020}
A germ of curve singularity $f=0$ is called quasi-homogeneous
if there is a holomorphic change of coordinates $(x,y)$ in $(\C^2,0)$ such that $(x(0),y(0))=(0,0)$ and 
$f(x,y)=a(x,y)\cdot g(x,y),\ \ a\in \O_{\C^2,0},a(0)\not=0$ and $g$ is a quasi-homogeneous polynomial in $x,y$, that is, there are $\alpha_1,\alpha_2\in \Z$ such that $g$ is homogeneous in the weighted ring 
$\C[x,y],\ \deg(x)=\alpha_1,\ \ \deg(y )=\alpha_2$.
\end{defi}
Let $\eta:=\frac{1}{\deg(g)}(\alpha_1 x dy-\alpha_2 ydx)$. We have $dg\wedge \eta=g dx\wedge dy$, and so by Theorem \ref{2nov2020}, 
the module of $1$-forms tangent to $g=0$ is freely generated by 
$dg, \eta$.
\begin{theo}[K. Saito \cite{Saito1971}]
 A germ of curve singularity $f=0$ is quasi-homogeneous if and only if $f$ belongs to the Jacobian ideal $\jacob(f):=\langle \frac{\partial f}{\partial x},\frac{\partial f}{\partial y} \rangle $.
\end{theo}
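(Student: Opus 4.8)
The statement is K. Saito's classical criterion, so I sketch a proof in both directions, keeping to the plane case $n=2$ and assuming throughout that the singularity is isolated (e.g. $f$ reduced, so that $\mathrm{Sing}(f=0)=\{0\}$), as is standard in this theorem.

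\textbf{Quasi-homogeneous $\Rightarrow f\in\jacob(f)$.} Membership in the Jacobian ideal is invariant under a biholomorphic change of coordinates, since the (invertible) Jacobian matrix of the change carries $\jacob(f)$ isomorphically onto the Jacobian ideal of the transformed function. Hence, by Definition \ref{10nov2020}, it suffices to treat $f=a\cdot g$ with $a(0)\neq 0$ and $g$ weighted-homogeneous of degree $d$ with positive weights $\alpha_1,\alpha_2$. Euler's relation $\alpha_1 x g_x+\alpha_2 y g_y=d\,g$ says exactly that the Euler operator $E:=\frac1d(\alpha_1 x\partial_x+\alpha_2 y\partial_y)$ satisfies $E(g)=g$, whence by Leibniz
\[
E(f)=E(a)\,g+a\,E(g)=\bigl(E(a)+a\bigr)g=\left(1+\frac{E(a)}{a}\right)f .
\]
Now $E(f)=\frac{\alpha_1 x}{d}f_x+\frac{\alpha_2 y}{d}f_y\in\jacob(f)$, while $E(a)\in\mathfrak m$ forces $1+E(a)/a$ to be a unit in $\O_{\C^2,0}$; dividing, $f\in\jacob(f)$.

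\textbf{$f\in\jacob(f)\Rightarrow$ quasi-homogeneous.} Writing $f=a\,f_x+b\,f_y$ and setting $V:=a\partial_x+b\partial_y$, we obtain a holomorphic vector field germ with $V(f)=f$. The entire problem is to find coordinates in which $V$ becomes an Euler field $\alpha_1 x\partial_x+\alpha_2 y\partial_y$ with positive rational weights: in such coordinates $V(f)=f$ is precisely the weighted-homogeneity of $f$. First I would show $V(0)=0$: otherwise the flow-box theorem straightens $V$ to $\partial_x$, turning $V(f)=f$ into $\partial_x f=f$, i.e. $f=e^{x}h(y)$, whose singular locus contains the $x$-axis, contradicting isolatedness. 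So $V$ has vanishing constant term, with linear part a vector field $L$ attached to a matrix $A$. Expanding $V(f)=f$ in homogeneous degree and comparing lowest-order pieces yields $L(f_m)=f_m$ for the leading form $f_m$; thus the eigenvalues $\lambda_1,\lambda_2$ of $A$ are the candidate weights, and one next checks they are positive rationals — rationality because $f$ carries at least two monomials whose weights are pinned to $1$, giving a nondegenerate integral system for $(\lambda_1,\lambda_2)$, and positivity because isolatedness forces the set of weight-$1$ monomials to be finite.

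The analytic heart is then the linearization of $V$. After a linear change diagonalizing (or Jordan-normalizing) $A$, I would linearize $V$ to the Euler field $E=\alpha_1 x\partial_x+\alpha_2 y\partial_y$. Since the eigenvalues are positive, $L$ lies in the Poincaré domain, so Poincaré--Dulac provides a convergent change bringing $V$ to a polynomial normal form $L+R$ with $R$ purely resonant; the remaining resonant terms are then eliminated using the relation $V(f)=f$ — equivalently, one constructs the normalizing change order-by-order along the weight filtration and invokes finite determinacy of the isolated singularity to upgrade the formal change of coordinates to a holomorphic one. In the final coordinates $V=E$, so $V(f)=f$ reads $E(f)=f$, i.e. $f$ is weighted-homogeneous (of degree $1$, or of integral degree $d$ after clearing the common denominator of the weights), which is the assertion.

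I expect the linearization step to be the main obstacle: eliminating the resonant terms while retaining control of $f$, and passing from a formal to a convergent change of coordinates. The isolated-singularity (reducedness) hypothesis is used exactly here and in ruling out $V(0)\neq 0$.
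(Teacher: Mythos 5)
The paper offers no proof of this statement: it is quoted as a classical theorem with a citation to Saito's 1971 paper \cite{Saito1971}, so the only meaningful comparison is with Saito's original argument, whose broad strategy (produce a vector field $V$ with $V(f)=f$, show $V(0)=0$ using isolatedness, then reduce $V$ to an Euler field) your sketch does reproduce. Your first direction is correct and complete: membership in the Jacobian ideal is invariant under biholomorphic coordinate changes, and the unit $1+E(a)/a$ correctly absorbs the unit factor $a$ in the paper's Definition \ref{10nov2020}.

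The converse, however, contains two genuine gaps, and together they are essentially the whole content of the theorem. First, your argument that the eigenvalues $\lambda_1,\lambda_2$ of the linear part $L$ can be taken to be positive rationals is not a proof: the relation $L(f_m)=f_m$ constrains only the leading form $f_m$, which may consist of a single monomial, in which case your ``nondegenerate integral system'' does not exist; and nothing in the sketch rules out complex eigenvalues, a non-semisimple $L$, or a negative weight. Second, Poincar\'e--Dulac cannot finish the job: resonant terms are precisely the obstruction to linearization and cannot be removed by any formal change of coordinates (already $V=\frac{1}{2}x\partial_x+(y+x^2)\partial_y$ is resonant, in normal form, and not linearizable), so saying the resonant terms ``are then eliminated using the relation $V(f)=f$'' names the key lemma rather than proving it. In Saito's proof this is exactly where the real work happens, and it requires exploiting the freedom to replace $V$ by a different solution $V'$ of $V'(f)=f$ (the solutions form a coset of the module of vector fields annihilating $f$), not merely coordinate changes applied to a fixed $V$. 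The only genuinely routine part of your plan for this direction is the final step, upgrading a formal equivalence to a convergent one via finite determinacy of the isolated singularity.
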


\section{Proof of Theorem \ref{main1} }
\label{5/11/2020-3}

In $V_f$ we consider the linear map $A_f$ defined by
$$
A_f:V_f\to V_f, A_f([P])=[Pf]
$$
Let $p(t)$ be the minimal polynomial of $A_f$. 
The critical values of $f$ are exactly the zeros of $p(t)$, see for instance \cite[Proposition 2.7 page 150]{Cox2005} \cite[\S 10.9]{ho13}. 
If $0\in\bar\k$ is not a critical value of $f$ then $\{f=0\}$ is a smooth curve.
Suppose that $f=0$ is singular, and hence, $A_f$ has a non-trivial kernel. We write $p(t)=tq(t)$.
By definition of a minimal polynomial
\begin{equation}
\label{10dec2020}
fq(f)=0, \ q(f)\not =0 \text{ in } V_f.
\end{equation}
The polynomial $q(f)$ is of big degree. Proposition \ref{alireza} implies that 
we can simplify $q(f)$ in $V_f$ and obtain
a polynomial $\Theta_f$ of weighted degree $\leq 2d-2\alpha_1-2\alpha_2$ which is equal to $q(f)$ in $V_f$. 
We obtain
\begin{equation}
f\Theta_fdx\wedge dy=df\wedge \omega_f,\ 
\end{equation}
Note that we do not have any control on the degree of $\omega_f$. 
Note also that $\Theta_f$ and $\omega_f$ depend on the monomial basis that we have chosen in Proposition \ref{alireza}.

The exponent of the affine curve $f=0$ is the number $m$ in $p(t)=t^mQ(t), \ Q(0)\not =0$.
The exponent of a curve is also the maximum size of the Jordan blocks of $A_f$ associated to the eigenvalue $0$. The exponent of a critical point $p$ of $f$ with $f(p)=0$  is the minimum number $\tilde m$ such that $f^{\tilde m}$ is zero 
in the local Milnor vector space $V_p:={\cal O}_{\A_\k^2, p}/\jacob(f)$.

\begin{prop}\label{kimurasan}
Let $\k$ be an algebraically closed field of characteristic zero. 
We have an isomorphism
$$
V_f\cong \oplus_{p\in P_f } \frac{{\cal O}_{\A_\k^2,p}}{\jacob(f)},
$$
induced by canonical restriction, where $p$ runs through the set $P_f$ of critical points of $f$. 
In particular, each piece in the above summand is invariant under the multiplication by $f$ map $A_f$. 
Let $\lambda_1,\lambda_2,\ldots,\lambda_k$ be the critical values of $f$ and
$m_i,\ i=1,2,\ldots,k$ be  the maximum of exponents of the
singularities in $f(x,y)-\lambda_i=0$. Then
the minimal polynomial of $A_f$ is  
$p(t):=(t-\lambda_1)^{m_1}(t-\lambda_2)^{m_2}\cdots (t-\lambda_k)^{m_k}$.
\end{prop}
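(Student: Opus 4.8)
The plan is to use tameness to make $V_f=\k[x,y]/\jacob(f)$ a finite-dimensional, hence Artinian, commutative $\k$-algebra, to split it by the structure theory of Artinian rings, and then to read off the minimal polynomial from a purely local computation of $A_f$.

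First I would establish the direct-sum decomposition. By Proposition \ref{alireza} the algebra $V_f$ is finite dimensional over $\k$, so it is Artinian and decomposes canonically as a finite product of local Artinian $\k$-algebras, one for each of its maximal ideals. Since $\k$ is algebraically closed, the Nullstellensatz identifies these maximal ideals with the points of the common zero locus of $f_x$ and $f_y$, that is, with the set $P_f$ of critical points of $f$. The factor attached to $p$ is the localization $(V_f)_{\mathfrak{m}_p}$; because localization is exact and commutes with passing to the quotient, this factor is canonically $\O_{\A^2_\k,p}/\jacob(f)=:V_p$. Moreover the resulting isomorphism $V_f\cong\bigoplus_{p\in P_f}V_p$ is exactly the map induced by restriction to the local rings, which gives the first assertion. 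The $A_f$-invariance of each summand is then automatic: the decomposition is one of $\k[x,y]$-algebras, and $A_f$ is multiplication by $f\in\k[x,y]$, so every factor $V_p$ is carried into itself.

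It remains to compute the minimal polynomial factor by factor. Fix $p\in P_f$ with critical value $f(p)=\lambda_i$, and on $V_p$ write $f=\lambda_i+(f-\lambda_i)$. The element $f-\lambda_i$ lies in the maximal ideal of the local Artinian ring $V_p$, hence is nilpotent, so the operator it induces on $V_p$ is nilpotent with nilpotency index the least $e_p$ for which $(f-\lambda_i)^{e_p}=0$ in $V_p$; since $\jacob(f)=\jacob(f-\lambda_i)$, this $e_p$ is precisely the exponent of $p$ viewed as a singularity of $f-\lambda_i=0$. Hence the minimal polynomial of $A_f|_{V_p}$ is $(t-\lambda_i)^{e_p}$. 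The minimal polynomial of $A_f$ on $V_f=\bigoplus_p V_p$ is the least common multiple of these, and grouping the critical points by their value $\lambda_i$ and setting $m_i=\max\{e_p:\ f(p)=\lambda_i\}$ produces $p(t)=\prod_{i=1}^k(t-\lambda_i)^{m_i}$.

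The step requiring the most care is the identification of the abstract local factor $(V_f)_{\mathfrak{m}_p}$ with the geometrically defined local Milnor algebra $\O_{\A^2_\k,p}/\jacob(f)$, together with the check that the abstract product splitting of the Artinian ring coincides with the restriction map asserted in the statement; the nilpotency argument, although it is the conceptual heart of the minimal-polynomial formula, is routine once this decomposition is in hand.
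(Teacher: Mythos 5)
Your proof is correct and takes essentially the same route as the paper: the decomposition you derive from Artinian structure theory plus the Nullstellensatz is precisely the ``well-known fact in commutative algebra'' the paper cites (\cite[Theorem 2.2]{Cox2005}), and your key step---that $f-\lambda_i$ acts nilpotently on each local factor with nilpotency index equal to the exponent of $p$---is exactly the paper's concluding observation that $A_f-f(p)\,\mathrm{Id}$ restricted to ${\cal O}_{\A_\k^2,p}/\jacob(f)$ is nilpotent of order the exponent of $p$. The only difference is that you spell out in full what the paper delegates to its references.
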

\begin{proof}
This is an immediate corollary of a well-known fact in commutative algebra, see \cite[Theorem 2.2 ]{Cox2005}. It also follows from   
Max Noether's theorem, see \cite[page 703]{gri}.  For the surjectivity of the restriction map,  we must modify the proof of Noether's theorem.    
Note that  $A_f-f(p) {\rm Id}$ restricted to  $\frac{{\cal O}_{\A_\k^2,p}}{\jacob(f)}$  is nilpotent of order which is the exponent  of $p$.
\end{proof}
Theorem \ref{main1} is a consequence of the following: 
\begin{theo}
\label{5feb07}
We have
\begin{enumerate}
\item 
If there exists $\one \in \kernel(A_f)$ such that $\one\cdot V_f=\ker(A_f)$ then $E_f$ is generated by $fdx,fdy,df,\omega_\one$,
where $f\one dx\wedge dy=df\wedge \omega_\one$. 
\item
If the Jordan blocks of $A_f$ associated to the eigenvalue $0$ have the same size then
$\one_f$ satisfies $\one_f\cdot V_f=\ker(A_f)$.
\item
If all the singularities of $C$ are quasi-homogeneous then
 the Jordan blocks of $A_f$ associated to the eigenvalue $0$ have the size $1$.
\end{enumerate}
\end{theo}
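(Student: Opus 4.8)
The plan is to realize $E_f$ as an extension of $\kernel(A_f)$ by the trivial submodule $T_f:=\langle fdx,fdy,df\rangle$, and to read off all three assertions from this one structural fact. First I would introduce the $\k[x,y]$-linear map $\Psi\colon E_f\to V_f$ sending $\omega$ to $[\alpha]$, where $\alpha$ is the unique $2$-form with $df\wedge\omega=f\alpha$ (uniqueness because $\k[x,y]$ is a domain and $\Omega^2_{\A^2_\k}$ is free of rank one, and linearity of $\Psi$ is immediate). Using the identification $V_f\cong \Omega^2_{\A^2_\k}/(df\wedge\Omega^1_{\A^2_\k})$, the relation $f\alpha=df\wedge\omega\in df\wedge\Omega^1_{\A^2_\k}$ gives $A_f([\alpha])=[f\alpha]=0$, so $\image(\Psi)\subseteq\kernel(A_f)$; conversely any $[\beta]\in\kernel(A_f)$ satisfies $f\beta=df\wedge\omega$ for some $\omega\in\Omega^1_{\A^2_\k}$, whence $\omega\in E_f$ and $\Psi(\omega)=[\beta]$. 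Thus $\Psi$ is surjective onto $\kernel(A_f)$.

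The crux is then to identify $\kernel(\Psi)$ with $T_f$. If $\Psi(\omega)=0$ then $\alpha=df\wedge\eta$ for some $\eta$, so $df\wedge(\omega-f\eta)=0$; here I would invoke the lemma that a $1$-form annihilated by $df$ is a multiple $g\,df$, which holds because tameness makes $f_x,f_y$ a regular sequence, forcing the only syzygy of $(f_x,f_y)$ to be the Koszul one. Hence $\omega=g\,df+f\eta\in T_f$, and the reverse inclusion $T_f\subseteq\kernel(\Psi)$ is a direct check (for instance $df\wedge(fdx)=-f f_y\,dx\wedge dy$ with $[f_y]=0$ in $V_f$). This produces the short exact sequence $0\to T_f\to E_f\stackrel{\Psi}{\longrightarrow}\kernel(A_f)\to 0$. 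Part (1) follows at once: choosing $\omega_\one\in E_f$ with $\Psi(\omega_\one)=[\one]$, which is exactly the defining relation $f\one\,dx\wedge dy=df\wedge\omega_\one$, the hypothesis $\one\cdot V_f=\kernel(A_f)$ says $\kernel(A_f)$ is cyclic on $[\one]$, so for any $\omega\in E_f$ we may write $\Psi(\omega)=P[\one]=\Psi(P\omega_\one)$ and conclude $\omega-P\omega_\one\in\kernel(\Psi)=T_f$.

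For Part (2) I would translate $\one_f\cdot V_f=\kernel(A_f)$ into linear algebra on the finite-dimensional space $V_f$. Since $[\one_f]=[q(f)]=q(A_f)[1]$ and $q(A_f)[P]=[q(f)P]$, one has $\one_f\cdot V_f=\image(q(A_f))$. Writing the minimal polynomial as $p(t)=t^{m}Q(t)$ with $Q(0)\neq 0$, so $q(t)=t^{m-1}Q(t)$, the factor $Q(A_f)$ is invertible on the generalized $0$-eigenspace $W_0$ and $q(A_f)$ annihilates every other generalized eigenspace; hence $\image(q(A_f))=\image(N^{m-1})$ where $N:=A_f|_{W_0}$ is nilpotent of index $m$. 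It remains to verify $\image(N^{m-1})=\kernel(N)$ precisely when all Jordan blocks of $N$ have size $m$: on a single block of size $m$ both subspaces equal the line $\kernel(N)$, and summing over equal blocks preserves the equality, whereas a strictly smaller block is killed by $N^{m-1}$ and enlarges $\kernel(N)$ without contributing to $\image(N^{m-1})$.

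Finally Part (3) reduces to Saito's criterion. By Proposition \ref{kimurasan} the generalized $0$-eigenspace $W_0$ is the direct sum of the local algebras $\O_{\A^2_\k,p}/\jacob(f)$ over the critical points $p$ on $C$, and on each piece $A_f$ is nilpotent of index equal to the local exponent of $f$ at $p$. Quasi-homogeneity gives, via Saito's theorem, $f\in\jacob(f)$ locally at $p$; since $\jacob(f)$ is an ideal, multiplication by $f$ then maps $\O_{\A^2_\k,p}$ into $\jacob(f)$, i.e. $A_f$ vanishes identically on that local piece, so $N=A_f|_{W_0}=0$ and every Jordan block has size $1$. Combining the three parts proves Theorem \ref{main1}: Part (3) makes the blocks equal (all of size one), Part (2) then yields a generator $\one_f$ of $\kernel(A_f)$, and Part (1) exhibits the four generators of $E_f$. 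The step I expect to be most delicate is the identification $\kernel(\Psi)=T_f$, since that is exactly where tameness enters through the regular-sequence and syzygy argument; everything downstream is formal linear algebra together with a citation of Saito.
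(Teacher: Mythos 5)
Your proposal is correct and takes essentially the same route as the paper: part (1) is the paper's argument (land in $\kernel(A_f)$, use the cyclicity hypothesis to write $\Psi(\omega)=P[\one]$, and finish with the De Rham lemma for tame polynomials), merely repackaged as the short exact sequence $0\to T_f\to E_f\to \kernel(A_f)\to 0$; parts (2) and (3) use the same reduction to the nilpotent Fitting component with $Q(A_f)$ invertible there, the same Jordan-block equality $\image(N^{m-1})=\kernel(N)$ iff all blocks have size $m$, and the same combination of Proposition \ref{kimurasan} with Saito's criterion $f\in\jacob(f)$ giving exponent $1$. The only presentational differences are the exact-sequence formulation and that you prove the De Rham lemma by the regular-sequence/Koszul-syzygy argument where the paper cites it.
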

\begin{proof}
 Proof of 1. 
For $\omega\in E_f$ we have $f\Theta_1dx\wedge dy=df\wedge \omega$ for some polynomial $\Theta_1\in\k[x,y]$. This implies that 
$\Theta_1\in \ker(A_f)$ and by our hypothesis, we have $\Theta_1=P\one$ in $V_f$ for some $P\in\k[x,y]$. We write this as
$\Theta_1dx\wedge dy-P\one dx\wedge dy=df\wedge \eta$. After multiplication of this equality by $f$ we get
$(\omega-P\omega_{f}-f\eta)\wedge df=0$ which implies the result. De Rham lemma, see for instance \cite{sa76} in the case of homogeneous polynomials and \cite[Proposition 10.3]{ho13} for tame 
 polynomials, implies that $\omega$ is of the desired format.

Proof of 2. It is enough to prove  
that the sequence 
\begin{equation}
\label{13dec2020}
V_f\stackrel{q(A_f)}{\rightarrow} V_f\stackrel{A_f}{\rightarrow} V_f
\end{equation}
is exact. For this we can assume that $\k$ is algebraically closed, as the exactness of a sequence of vector spaces is independent of whether $\k$ is algebraically closed or not.   It follows from \eqref{10dec2020}  that  $\image(q(A_f))\subset \kernel(A_f)$. The non-trivial assertion is $\kernel(A_f)\subset \image(q(A_f))$.
Let $V_f=V_1\oplus V_2$ such that $A_f\mid_{V_1}$ is nilpotent and 
$A_f\mid_{V_2}$ is invertible. For this we simply write $A_f$ in Jordan block format, $V_1$ is constructed from the blocks with eigenvalue $0$ and $V_2$ from the remaining blocks.  From $p(t)=tq(t)=t^{m}Q(t),\ Q(0)\not=0$, it follows that $q(A_f)\mid_{V_1}=A_f^{m-1}\circ B_f$, where  $B_f:=Q(A_f)\mid_{V_1}: V_1\to V_1$ is invertible.  Therefore, we can assume that 
$p(t)=t^m, q(t)=t^{m-1}$ and $Q=1$.   Now, we use the following statement in linear algebra. Let $A$ be an $n\times n$ matrix with entries in $\bar\k$ and all eigenvalues equal to zero. Let also $m$ be the maximal size of its Jordan blocks. Then  $\kernel(A)=\image(A^{m-1})$ if and only if all the Jordan blocks of $A$ are of the same size. In order to see this fact, let 
$$
e_1^1,e_2^1,\ldots,e_{k_1}^1, e_{1}^2,e_{2}^2,\ldots,
e_{k_2}^2,\ldots\ldots, 
e_{1}^s,e_{2}^s,\ldots, e_{k_s}^s, 
k_1\leq k_2\leq \cdots\leq k_s=m.
$$ 
be a basis of $\bar\k^n$ such that $A$ in each block $e_{1}^j,\ldots,e_{k_j}^j$
is a shifting map and $A_f(e_{k_j}^j)=0$. By definition of $m$ we have $\image(A^{m-1})\subset \kernel(A)$. Moreover,  $\kernel(A)\subset \image(A^{m-1})$ if and only if 
$k_1=k_2=\cdots=k_s$.

Proof of 3. 
The third part follows from Proposition \ref{kimurasan} and the fact that
the exponent of a quasi-homogeneous  singularity is $1$.
\end{proof}

\begin{rem}\rm
 Let $[\Theta_i],\ i=1,2,\ldots,a$ be a basis for the $\k$-vector space
$\ker(A_f)$ and write $df\wedge \omega_i=f\Theta_idx\wedge dy$. 
The $\k[x,y]$-module $E_f$ is generated by
$fdx,fdy,df,\omega_i,\ i=1,2,\ldots,a$.
 This is an immediate consequence of definitions.  
\end{rem}

\begin{rem}\rm
 A careful analysis of the proof of Theorem \ref{main1} shows that this theorem is true for curves $f=0$ such that the multiplication by $f$ map in  its Milnor vector space $V_f$ has Jordan blocks of the same size. Jordan blocks of size $1$ correspond to quasi-homogeneous singularities. In Appendix \ref{24/11/2020}, C. Hertling proves that there is no curve singularity such that multiplication by $f$ in its Milnor vector space has only Jordan blocks of size $2$.  
\end{rem}

\begin{prop}
\label{27oct2020}
A foliation which leaves a smooth curve $C\in \P^2_\k$ invariant is of the form
$$
\F(Pdf+f\omega), \ \omega\in\Omega_{\A_k^2}^1, \  P\in \Omega_{\A_k^2}^0
$$
in an affine chart $\A_\k^2\subset  \P^2_\k$. 
In other words the $\k[x,y]$-module $E_f$ defined in \eqref{13july2020} is generated by $fdx, fdy, df$.
\end{prop}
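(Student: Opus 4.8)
The plan is to prove the second, purely affine, formulation: that the $\k[x,y]$-module $E_f$ is generated by $fdx$, $fdy$, $df$. That these three forms lie in $E_f$ is immediate, since $df\wedge(fdx)=-ff_y\,dx\wedge dy$, $df\wedge(fdy)=ff_x\,dx\wedge dy$ and $df\wedge df=0$. The first step is to make the membership condition explicit. Writing $\omega=Pdx+Qdy$ and $df=f_x\,dx+f_y\,dy$, one computes $df\wedge\omega=(f_xQ-f_yP)\,dx\wedge dy$, and since $\Omega^2_{\A_\k^2}=\k[x,y]\,dx\wedge dy$ is free of rank one, $\omega\in E_f$ if and only if $f$ divides $f_xQ-f_yP$ in $\k[x,y]$.

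Next I would reformulate the target. To write $\omega=A\,fdx+B\,fdy+C\,df$ with $A,B,C\in\k[x,y]$ amounts to finding a single polynomial $C$ with $P-Cf_x\equiv 0$ and $Q-Cf_y\equiv 0$ modulo $f$; the coefficients $A,B$ are then forced as the resulting quotients. Thus everything reduces to solving, in the coordinate ring $R:=\k[x,y]/\langle f\rangle$, the pair of equations $\bar P=\bar C\,\bar f_x$ and $\bar Q=\bar C\,\bar f_y$, under the sole hypothesis $\bar f_x\bar Q=\bar f_y\bar P$ coming from $\omega\in E_f$. Geometrically this says that the vectors $(\bar P,\bar Q)$ and $(\bar f_x,\bar f_y)$ are parallel in $R^2$ (their $2\times2$ determinant vanishes), and I want to exhibit the proportionality factor $\bar C$.

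The decisive step is where smoothness enters. Since $C\subset\P^2_\k$ is smooth, its affine part has no point where $f=f_x=f_y=0$, so by the Nullstellensatz $\langle f,f_x,f_y\rangle=\k[x,y]$; the vanishing over $\bar\k$ descends to $\k$ because $\bar\k[x,y]$ is faithfully flat over $\k[x,y]$, whence $1\in\langle f,f_x,f_y\rangle$ already over $\k$. Hence there are $a,b\in\k[x,y]$ with $af_x+bf_y\equiv 1\pmod f$. I would then set $C:=aP+bQ$ and verify, using the tangency relation $f_xQ\equiv f_yP\pmod f$, that $Cf_x=aPf_x+bQf_x\equiv aPf_x+bPf_y=P(af_x+bf_y)\equiv P$ and symmetrically $Cf_y=aPf_y+bQf_y\equiv aQf_x+bQf_y=Q(af_x+bf_y)\equiv Q$ modulo $f$. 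Thus $P-Cf_x=Af$ and $Q-Cf_y=Bf$ for suitable $A,B\in\k[x,y]$, giving $\omega=A(fdx)+B(fdy)+C\,df$, which is the claim.

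The routine parts are the two wedge computations and the final congruence check; the only genuine idea, and the sole place smoothness is used, is the comaximality $\langle f,f_x,f_y\rangle=(1)$ that produces the B\'ezout coefficients $a,b$ and lets one write $C$ down explicitly. I therefore do not expect a serious obstacle here. It is worth recording that this is precisely the step that fails for singular curves: smoothness forces $0$ to be a non-critical value of $f$, so the multiplication map $A_f$ on $V_f$ is invertible, $\ker(A_f)=0$, and no fourth generator $\omega_f$ as in Theorem \ref{main1} is needed.
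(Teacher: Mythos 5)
Your proof is correct, but it takes a genuinely different route from the paper's. The paper first chooses the affine chart so that the line at infinity meets $C$ transversally (making $f$ tame), and then argues through the Milnor vector space: writing $df\wedge\omega=fP\,dx\wedge dy$, the class of $P$ lies in $\ker(A_f)$, where $A_f$ is multiplication by $f$ on $V_f=\k[x,y]/\jacob(f)$; smoothness means $0$ is not a critical value, so $\ker(A_f)=0$ and $P$ vanishes in $V_f$, i.e. $P\,dx\wedge dy=df\wedge\alpha$ for some $1$-form $\alpha$; then $df\wedge(f\alpha-\omega)=0$, and the de Rham lemma for tame polynomials gives $\omega=f\alpha-g\,df$. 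You instead work directly in the coordinate ring $\k[x,y]/\langle f\rangle$ and use that smoothness makes $\langle f,f_x,f_y\rangle$ the unit ideal (Nullstellensatz over $\bar\k$ plus faithfully flat descent to $\k$), producing the proportionality factor $C=aP+bQ$ from a B\'ezout identity; the congruence check using $f_xQ\equiv f_yP \pmod{f}$ is correct, and the coefficients $A,B$ are forced as you say. Your argument is more elementary and in some respects more general: it needs neither tameness, nor the transversality of the line at infinity, nor the identification $V_f\cong\Omega^2_{\A^2_\k}/df\wedge\Omega^1_{\A^2_\k}$, nor the de Rham lemma, so it works in every affine chart and yields explicit coefficients. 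What the paper's route buys is coherence with the rest of the article: its proof is exactly the degenerate case $\ker(A_f)=0$ of the proof of Theorem \ref{main1}, which makes transparent why singular curves require the extra generator $\omega_f$; this is also precisely where your B\'ezout trick breaks down, since $\langle f,f_x,f_y\rangle$ ceases to be the unit ideal as soon as the curve acquires a singular point, and your argument does not extend to that situation.
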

\begin{proof}
 We take a line in  $\P^2_\k$ which intersects $C$ transversally at $\deg(C)$ points, and hence, in the affine chart which is its complement, the curve $C$ is given by the tame polynomial $f(x,y)=0$. If $\omega\wedge df=fPdx\wedge dy$ then $P$ is in the kernel of the map $A_f$. Since $f=0$ is smooth, we conclude that $P$ itself is zero in $V_f$, and hence, 
 $Pdx\wedge dy=df\wedge\alpha$ for some $\alpha\in\Omega_{\A^2_\k}^1$. Therefore,  $df\wedge (f\alpha-\omega)=0$. 
 De Rham lemma for tame polynomials, see  \cite[Proposition 10.3]{ho13}, implies that $\omega$ is of the desired format.  
\end{proof}

\begin{exam}\rm
\label{5nov2020}
 The singularity $(0,0)$ of the curve $f:=x^5+y^5-x^2y^2=0$ is called A'Campo singularity and it is not quasi-homogeneous. The polynomial $f$ has two critical values  $s=0,-\frac{16}{3125}$. Over the critical value $s=0$, we have only the singularity $(0,0)\in \A^2_\k$ of Milnor number $11$. In this case  
 $A_f$ has $10$ Jordan blocks, one is of size $2$ and $9$ of size $1$. Over the critical value $s=-\frac{16}{3125}$ we have $5$ singularities of Milnor number $1$. In this case, $A_f$ has  $5$ Jordan blocks of size $1$. The Macaulay code gives us the following three generators of $E_f$:
 
 {\tiny 
 \begin{verbatim}
  loadPackage "VectorFields"
   R=QQ[x,y];
   f=f=x^5+y^5-x^2*y^2;
   derlog(ideal (f));
-->image | -25x2y2+6xy    -25x3y+5y3+4x2 -5x4+3xy2 |
-->      | -25xy3+5x3+4y2 -25x2y2+6xy    -5x3y+2y3 |
 \end{verbatim}
 }
 The columns $[P,Q]^{\rm tr }$ of the output matrix are vector fields $P\frac{\partial}{\partial x}+Q\frac{\partial}{\partial y}$. 
 Such three vector fields generate the $\Q[x,y]$-module of vector fields tangent to $f=0$. The corresponding $1$-forms must be written as $Pdy-Qdx$. Let $\omega_1,\omega_2,\omega_3$ be such $1$-forms. The minimal polynomial of $A_f$ turns out to be $t^2(3125t+16)$ and 
 $$
 \omega_f=-\frac{4}{25}x^2y\omega_1.
 $$
 This implies that $\omega_1$ is not in the module generated by $df,fdx,fdy,\omega_f$.  
 The  real locus of $f=0$ is depicted in 
 Figure \ref{8oct2020}.
 \begin{figure}[t]
\begin{center}
\includegraphics[width=0.3\textwidth]{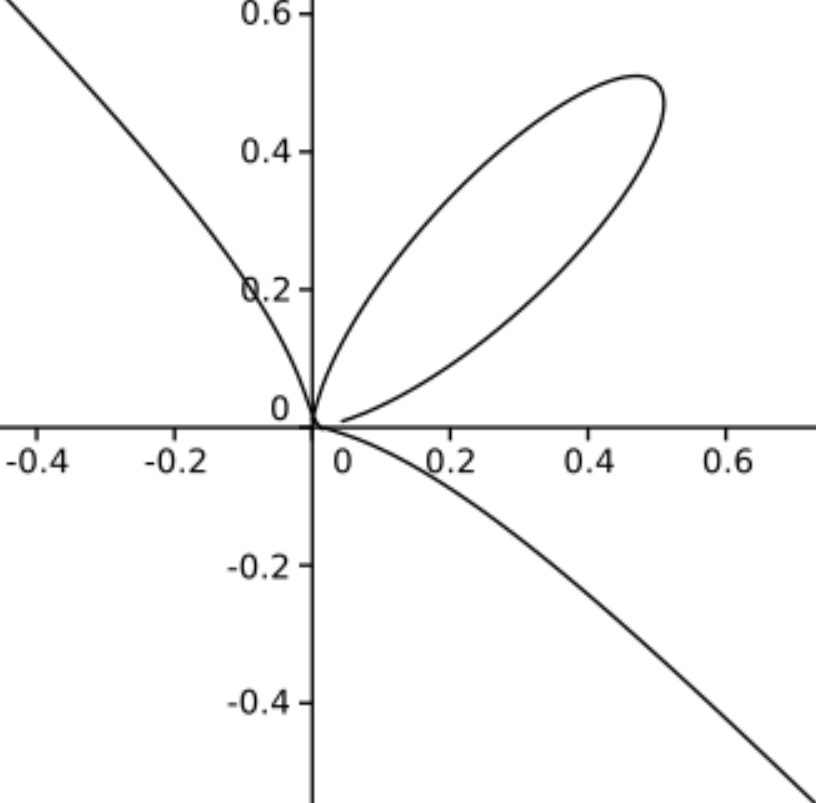}
\caption{$x^5+y^5-x^2y^2=0$}
\label{8oct2020}
\end{center}
\end{figure}
For our computations we have used the following code.
{\tiny
\begin{verbatim}
  LIB "foliation.lib";  
  ring r=0, (x,y),dp; poly f=x5+y5-x2*y2; 
  matrix A=mulmat(f,f); list ll=jordan(A); print(jordanmatrix(ll)); 
  vector v1=[-(-25xy3+5x3+4y2), -25x2y2+6xy]; vector v2=[-(-25x2y2+6xy),-25x3y+5y3+4x2]; vector v3=[-(-5x3y+2y3),-5x4+3xy2];
  vector df=[diff(f,x),diff(f,y)]; vector fdy=[0,f]; vector fdx=[f,0]; module m=v1,v2,v3; 
  division(df,m); division(fdx,m); division(fdy,m); 
  poly disc=Discrim(f); list l=factorize(disc); disc=x^2*(3125x+16); 
  ideal I=std(jacob(f));  poly p=reduce(subst(disc/var(1), var(1),f), I); 
  list l=division(p*f, jacob(f)); l; vector om=[-l[1][2,1], l[1][1,1]]; division(om,m);
  module m2=df,fdx,fdy,om; division(v1, m2);           |      
\end{verbatim}
}
\end{exam}

\def\an{{\rm an}}
\section{A consequence of Quillen-Suslin theorem}
\label{5/11/2020-4}
\def\Sh{{\cal S}}
In Theorem \ref{main1} we have not assumed that $\k\subset\C$ is an algebraically closed field. It turns out that if we use the algebraic closure of $\k$ then $E_f$ is free of rank $2$.
\begin{prop}
\label{27/10/2020}
Let $f\in\k[x,y]$ be a tame polynomial  and assume that $f=0$ is smooth or at most it has  quasi-homogeneous singularities.
 The $\bar\k[x,y]$-module $E_f\otimes_\k\bar\k$ is free of rank $2$. 
\end{prop}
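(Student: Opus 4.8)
The plan is to show that $E_f\otimes_\k\bar\k$ is a finitely generated projective $\bar\k[x,y]$-module of constant rank $2$ and then to invoke the Quillen--Suslin theorem, which asserts that projective modules over a polynomial ring over a field are free. First I would observe that, since $\bar\k$ is flat over $\k$ and $E_f$ is the kernel of the $\k[x,y]$-linear map $\Omega^1_{\A^2_\k}\to \Omega^2_{\A^2_\k}/f\,\Omega^2_{\A^2_\k}$, $\omega\mapsto[df\wedge\omega]$, extension of scalars commutes with this kernel. Hence $E_f\otimes_\k\bar\k$ is canonically identified with the analogous module of $1$-forms tangent to $f=0$ over $\bar\k[x,y]$, and we may assume from the outset that $\k=\bar\k$ is algebraically closed. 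Being a submodule of the free module $\Omega^1$, it is torsion-free and, by Noetherianity, finitely presented.

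The key step is to prove that the coherent sheaf $\widetilde{E_f}$ on $\A^2_{\bar\k}$ associated to $E_f$ is locally free of rank $2$, which I would verify at every closed point $p$; for a finitely presented module this suffices to conclude projectivity. If $p\notin\{f=0\}$ then $f$ is a unit in ${\cal O}_{\A^2,p}$, the condition $df\wedge\omega\in(f)\Omega^2$ is vacuous, and the localization equals $\Omega^1_p$, free of rank $2$. If $p$ lies on the curve, then by hypothesis the germ of $f=0$ at $p$ is quasi-homogeneous (smooth points being the trivial case), so by K. Saito's Theorem \ref{2nov2020} the module of germs of $1$-forms tangent to $f=0$ is freely generated by two elements $\omega_0,\omega_\infty$ with $\omega_0\wedge\omega_\infty=f\,dx\wedge dy$; here one uses the explicit pair $dg,\eta$ recalled after Definition \ref{10nov2020}. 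Thus the analytic local module is free of rank $2$, and Serre's GAGA --- concretely, faithfully flat base change from the algebraic local ring ${\cal O}_{\A^2,p}$ to the analytic (or completed) local ring, which commutes with the formation of the tangent module since the latter is a kernel of a map of free modules --- transfers this to freeness of the localization of $E_f\otimes_\k\bar\k$ at $p$.

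With local freeness of rank $2$ established at all closed points, $E_f\otimes_\k\bar\k$ is a finitely presented locally free, hence projective, $\bar\k[x,y]$-module; its rank is the generic value $2$ computed in the first case above, and it is constant because $\A^2_{\bar\k}$ is irreducible. The Quillen--Suslin theorem then yields that it is free of rank $2$.

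I expect the main obstacle to be the treatment of the quasi-homogeneous singular points: this is precisely where both the hypothesis and K. Saito's criterion are indispensable, and where the argument would fail for a non-quasi-homogeneous singularity such as the A'Campo curve of Example \ref{5nov2020}, whose module is not free. Some care is also required in justifying the descent from analytic to algebraic local freeness, namely in checking that the analytic module of tangent germs really is the base change of the localized $E_f$ and that faithfully flat descent applies to transport freeness downward.
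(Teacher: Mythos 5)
Your proof is correct, and while it shares the same two pillars as the paper's --- K. Saito's criterion (Theorem \ref{2nov2020}) giving analytic local freeness of rank $2$ at quasi-homogeneous points, and Quillen--Suslin converting projectivity into freeness --- the middle step, passing from the analytic local statement to algebraic projectivity, is genuinely different. The paper goes global: it compactifies $\A^2_\k$, desingularizes the surface at infinity (this is where tameness enters, to control the curve at the boundary), forms the sheaf of tangent $1$-forms on the resulting \emph{projective} surface $M$, and applies Serre's GAGA theorem, which requires properness of $M$, to transfer local freeness from the analytic sheaf to the algebraic one, before restricting to the affine chart and taking global sections. You never leave the affine plane: at each closed point you identify the localization of $E_f\otimes_\k\bar\k$ with the analytic module of tangent germs (legitimate, since the module is the kernel of a map of free modules and the base change is flat), apply Saito there, and descend freeness along the faithfully flat local map ${\cal O}_{\A^2,p}\to{\cal O}^{\rm an}_{\C^2,p}$; then finitely generated plus free at all maximal ideals gives projectivity over the Noetherian ring $\bar\k[x,y]$. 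Two remarks. First, what you invoke is not the GAGA theorem itself but the local lemma underlying it (flatness of the analytic over the algebraic local ring); your label is off, though the mathematics --- faithfully flat descent of freeness --- is exactly right. Second, when $\bar\k\neq\C$ you should insert one more faithfully flat local base change ${\cal O}_{\A^2_{\bar\k},p}\to{\cal O}_{\A^2_\C,p}$ before passing to the analytic local ring; you gesture at this concern in your closing paragraph but do not spell it out. The payoff of your route is that it is purely local, avoids compactification, resolution at infinity and properness altogether, and reveals in passing that tameness is not actually needed for this proposition, only quasi-homogeneity of the affine singularities; what the paper's global route buys is an explicit handle on the curve at infinity, which fits the sheaf-theoretic picture used elsewhere in the paper.
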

\begin{proof}
The $\k[x,y]$-module $E_f$ is not necessarily projective, however, $E_f\otimes_\k \bar\k $ turns out to be projective. 
The reason is as follows. We compactify $\A^2_\k$ in $\P^2_\k$ and consider the curve $C$ induced by $f=0$ in  $\P^2_\k$. For $\alpha_1=\alpha_2=1$ this curve intersects the line at infinity transversely, and hence, it has no singularity at infinity. However, for arbitrary $\alpha_i$'s it might have singularities at infinity. We perform a desingularization of singularities of $C$ at infinity and get a surface $M$ defined over $\k$ and with a chart $\A^2_\k\subset M$. The curve given by $f=0$ in $\A^2_\k$ induces a curve (we call it again $C$) in $M$ and it has now only smooth points in the compactification divisor $M-\A^2_\k$. Note that for all these we do not need to assume that $\k$ is algebraically closed.
Let $M^{\an}$ be the underlying complex manifold of $M/\k$ for a fixed embedding $\k\subset \C$. 
 
Let $\Sh^{\rm alg}$ be the subsheaf of $\Omega^1_{M/\k}$ containing differential $1$-forms tangent to the curve given by $C$ in $M$.
In a similar way we define $\Sh^{\rm an}\subset \Omega^1_{M^\an}$ in the 
holomorphic context. By definition $\Sh^{\rm an}$ is the analytification of the algebraic sheaf  $\Sh^{\rm alg}\otimes_\k\C$.
Theorem  \ref{2nov2020} applied to quasi-homogeneous singularities implies that $\Sh^{\rm an}$ is a locally free sheaf. Note that here we use the desingularization process as above: the points at infinity of $f=0$ are smooth. 
By Serre's GAGA,  $\Sh^{\rm alg}\otimes_\k\C$ is also a locally free sheaf. Now we look at $\Sh^{\rm alg}\otimes_\k\C$ in the affine chart $\C^2\subset M^{\an}$ and conclude that 
$\Sh^{\rm alg}\otimes_{\k}\bar\k$ is a locally free sheaf in $\A^2_{\bar\k}$. 
For an algebraically closed field $\bar\k$, locally free sheaves over $\A^2_{\bar\k}$ are in one to one correspondence with projective modules: the correspondence is given by taking global sections. Now by Quillen-Suslin theorem, see for instance \cite[Theorem 3.7, page 850]{Langalgebra} projective $\bar\k[x,y]$-modules are free. Note that for Quillen-Suslin theorem we do not need that the base field is algebraically closed, however, for the correspondence with locally free sheaves we need this fact.   

\end{proof}
\begin{rem}\rm
 In general, we do not know how to write the local generators of $\Sh^{\rm alg}$ around each point, however, for smooth curves this is as follows: In the chart $U_0: f_x\not=0$ (resp. $U_1: f_y\not=0$) it is freely generated by $df, fdy$ (resp. $df, fdx$). For instance, $fdx=\frac{f}{f_x}df-\frac{f_y}{f_x}fdy$. If $f=0$ is smooth, $U_0,U_1$ cover $\A^2_\k$. 
\end{rem}

\begin{rem}\rm
 If the module $E_f$ has two elements $\omega_0,\omega_\infty$ with $\omega_0\wedge \omega_\infty=fdx\wedge dy$ then they generate $E_f$ as $\k[x,y]$-module. This is as follows. By Theorem \ref{2nov2020}, the sheaf $\Sh^{\an}$ is locally free, and hence a similar argument as in Proposition 
 \ref{27/10/2020} implies that $E_f\otimes_\k\C$ is generated by two elements $\check\omega_0,\check\omega_\infty$. Moreover, these two elements generate the stalk of $\Sh^\an$ at any point of $\C^2$. It follows that $\check\omega_0\wedge \check\omega_\infty=fdx\wedge dy$. We write $\omega_0, \omega_\infty$ in terms of 
 $\check\omega_0, \check\omega_\infty$ and we get a $2\times 2$ matrix which has non-zero constant determinant. 
\end{rem}

Let $\omega_0,\omega_\infty$ be two generators of the $\bar\k[x,y]$-module $E_f\otimes_\k\bar\k$. It turns out that we have a finite extension $\K$ of $\k$ such that $\omega_0$ and $\omega_\infty$ are defined over $\K$ and $E_f\otimes_\k\K$ is freely generated by $\omega_0,\omega_\infty$.  It is a natural question to bound $[\K:\k]$ in terms of some arithmetic invariants of the curve $C$. 
The case of a circle might be enlightening, see Example \ref{circle2020}.

\section{Examples}
\label{lopes2020}
In this section we explain many examples  of curves $f=0$ such that $E_f$ is generated by two elements (including those in Table \ref{30june2020}). In all these examples $E_f$ is generated by two foliations $\omega_0,\omega_\infty$
with $\omega_0\wedge \omega_\infty=fdx\wedge dy$. The polynomial $1$-form $\omega_f$ is usually big and so we do not reproduce it here. All the figures in Table \ref{30june2020} are the real locus of $f=0$, except for Lins Neto's example, whose figure is an artistic way to depict the arrangement of lines and it is taken from the original article \cite{LinsNeto2002}.  

\begin{exam}\rm
\label{circle2020}
 The module of $1$-forms tangent to $f:=xy-1=0$ is freely generated by 
 $$
 \omega_0:=ydf-fdy=y^2dx+dy,\ \omega_\infty:=fdx,
 $$ 
 which satisfy $\omega_0\wedge \omega_\infty=-fdx\wedge dy$.
 This follows from Proposition \ref{27oct2020} and the identities:
 \begin{equation}
  df=x\omega_0+y\omega_\infty,\ \ \ fdy=f\omega_0-y^2\omega_\infty.
 \end{equation}
 Over $\Q(i)$ this curve  is isomorphic to $f:=x^2+y^2-1=0$.  In fact, the transformation $(x,y)\mapsto (x+iy,x-iy)$ sends the circle $x^2+y^2-1=0$ to $xy-1=0$. 
 The pull-back of the above differential forms is 
\begin{eqnarray*}
\omega_0 &=&\Re(\omega_0)+i\Im(\omega_0):=(x^2-y^2+1)dx+2xydy+i((x^2-y^2-1)dy-2xydx),\\ 
\omega_\infty &=& \Re(\omega_\infty)+i\Im(\omega_\infty)=fdx+ifdy.
\end{eqnarray*}
It would be interesting to prove that the submodule of $\Omega^1_{\A^2_\Q}$ tangent to $x^2+y^2-1$ is not generated by two elements 
defined over $\Q$. 
 For our computations in this example we have used 
 {\tiny
\begin{verbatim}
 LIB "foliation.lib"; 
 ring r=(0,z), (x,y), dp;  minpoly=z^2+1; 
 poly f=x*y-1; matrix P=transpose(MinFol(f,1)[1]); P;
 vector df=[diff(f,x),diff(f,y)];  MinFol(f,1, df); vector fdy=[0,f];  MinFol(f,1, fdy); 
 poly g=x^2+y^2-1;  vector v1=[(x^2-y^2+1),2*x*y]; vector w1=[-2*x*y, (x^2-y^2-1)]; vector v2=[g,0]; vector w2=[0,g];  
 module m=v1+z*w1,v2+z*w2; division(v1,m); division(w1,m); division(v2,m); division(w2,m); 
 division(v1-z*w1,m); division(v2-z*w2,m);  vector dg=[diff(g,x),diff(g,y)];  division(dg,m);  MinFol(g,1, dg);
\end{verbatim}
}
\end{exam}

\begin{exam}[Riccati]\rm
Let us consider the case in which $f$ does not depend on $y$, and hence, $f=0$ is a union of $d$ parallel to 
$y$ axis  lines. In this case $\omega_0=dx$ and $\omega_\infty=f(x)dy$. The Riccati foliations given by  $(p_2(x)y^2+p_1(x)y+p_0(0))dx+f(x)dy$ are in $E_f$. 
\end{exam}

\begin{exam}[Quasi-homogeneous singularities]\rm
These are curves $C:f=0$ given by a homogeneous tame polynomial $f$ of weighted degree $d$ in the weighted ring $\k[x,y],\ \deg(x)=\alpha_1,\ \ deg(y)=\alpha_2$, see Definition \ref{10/11/2020}. This is also the algebraic version of Definition \ref{10nov2020}.
In this case $E_f$ is generated by $\omega_0=df ,\omega_\infty:=\frac{\alpha_1}{d}xdy-\frac{\alpha_2}{d}ydx$.
\end{exam}

\begin{exam}\rm
The arrangement of lines given by
$$
f=(x^a-1)(y^a-1)(x^a-y^a).
$$
for $a=3$ has been studied by Lins Neto in \cite{LinsNeto2002}. In this case $E_f$ is generated by 
$$ 
\omega_0:= -(y^a-1)x^{a-1})dx+(x^a-1)y^{a-1}dy,\ \ \omega_\infty:= -(y^{a+1}-y)dx +(x^{a+1}-x)dy,\ 
$$
The birational $\A_\k ^2 \dashrightarrow \A_\k^2$ given by $(x,y)\mapsto (\frac{1}{x},\frac{1}{y})$ in the affine 
chart $(x,y)$ maps $\omega_0$ to $\omega_\infty$ and vice versa.  For $a=3$ one can prove that $\F(\omega_0+t\omega_\infty),\ \ t\in\P^1_\k$ has a first integral 
if and only if $t$ is a constant in $\Q(e^{\frac{2\pi i}{3}})$. The degree of such a first integral has been computed in \cite{Puchuri2013}.
Another example due to Lins Neto is 
$f=4y^2(1-3x)-4x^3+(3x^2+y^2)^2$ which is up to a linear transformation is the deltoid in Table \ref{30june2020}. 
\end{exam}

\begin{exam}\rm
The arrangement of lines 
given by 
$$f= (x^2-1)(y^2-1)(x^2-(2\tau+1)^2)(y^2-(2\tau +1)^2)\cdot (x^2-y^2)
$$
$$
\cdot(y+1+\frac{1}{\tau} (x+1))(y+1+\tau (x+1))\cdot (y-1+ \tau(x-1))(y-1+\frac{1}{\tau}(x-1))
$$
with $\tau^2-\tau-1=0$ is studied in \cite{peme} and in this case $E_f$ is generated by $\omega_0:=-(y^2-(2\tau+1)^2)(y^2-1)(y+(2\tau-1)x)dx+(x^2-(2\tau+1)^2)(x^2-1)(x+(2\tau-1)y)dy$
and $\omega_\infty$ whose expression can be found in the mentioned reference. Another example from this reference is $f=-1728x^5+720x^3y-80xy^2+64(5x^2-y)^2 + y^3$.  In \cite{KobayashiNaruki} the authors describe the 1-forms $\omega_0,\omega_\infty$ in another coordinate system, and it turn out that 
these are defined over $\Q$.  
\end{exam}

\begin{exam}\rm [Graph of a function]
In this example we consider a curve of the form $y=f(x)$ which is smooth and of genus zero. Let $d:=\deg(f)$. 
For $d=2,3,\cdots,11$ we can verify the following conjecture: For a generic polynomial $f(x)\in\k[x]$ of degree $d$ 
(in a Zariski open subset of the parameter space of $f$), the $\k[x,y]$-module $E_f$ is generated by 
$$
\omega_0:=(ya_1(x)+a_2(x))dx+ a_{3}(x)dy,\ \ \omega_\infty:=(yb_1(x)+b_2(x))dx+b_{3}(x)dy,
$$
where for $d$ even $a_1,a_2,a_3,b_1,b_2,b_3\in \k[x]$ are respectively of degree $\frac{d}{2}-2, \frac{d}{2},  \frac{d}{2}-1,  \frac{d}{2}-1, \frac{d}{2},
\frac{d}{2}$, and for $d$ odd they are respectively of degree  $\frac{d-3}{2}, \frac{d-1}{2},  \frac{d-1}{2},  \frac{d-5}{2}, \frac{d+1}{2},
\frac{d-3}{2}$. This is equivalent to the existence of $a_i$ and $b_i$'s with
$$
a_1b_2-a_2b_1=-f',\ \ \ fa_1+a_2+ f'a_3=0,\ \ fb_1+b_2+ f'b_3=0. 
$$
From these three equalities we can derive
$$
a_1b_3-a_3b_1=1,\ \ a_2b_3-a_3b_2=-f. 
$$
We can also verify the existence of $\omega_0$ and $\omega_\infty$ 
for random choices of $f\in\Q[x,y]$ of higher degree. The three canonical foliations can be written in terms of generators: 
\begin{eqnarray*}
 (y-f)dx &=&b_3\omega_0-a_3\omega_\infty,\\
 (y-f)dy &=&-(yb_1+b_2)\omega_0+(ya_1+a_2)\omega_\infty \\
 d(y-f) &=&-b_1\omega_0+a_1\omega_\infty.
\end{eqnarray*}
For the computations in this example we have used the following code: 
{\tiny
\begin{verbatim}
LIB "foliation.lib";   int d=8;
ring r=(0,t(0..d-2)), (x,y), dp; int i=1; int j;  
poly f=-y+x^d; for (i=0; i<=d-2; i=i+1){f=f+t(i)*x^i;} 
//--Use the next command for a random choice of f. 
//--poly f=RandomPoly(list(x,y),d,-19,10); f=subst(f,y,1); f=f-y;   
matrix P=transpose(MinFol(f,1)[1]); poly co=det(P)/f; co*f-det(P);    
matrix Q1=diff(P,y); intmat degQ1[2][2];    matrix Q2=P-y*Q1; intmat degQ2[2][2];
for (i=1; i<=2; i=i+1)
    {for (j=1; j<=2; j=j+1){degQ1[i,j]=deg(Q1[i,j]); degQ2[i,j]=deg(Q2[i,j]);}}
f; P; degQ1; degQ2; 
\end{verbatim}
}
\end{exam}

\begin{exam}[Rose with $k=\frac{1}{2}$]\rm
In this case the foliation $\F(\omega_0)$ has the  first integral $F:=\frac{(x^2 + y^2 - 1/3)^3}{36x^2 + 9y^2 - 4}$.
Its generic fiber is smooth and has two singular points at infinity. 
It has three critical values $t=0, \frac{1}{108}, \infty$. 
The fiber over $t=\frac{1}{108}$ is our initial curve $f=0$ which has single non-degenerated singularity (Milnor number equal to one). 
Since $f=0$ is a rational curve and all the fibers intersects the line at infinity in the same way, we conclude that the genus of a generic fiber of $F$ is one. If we set
$$
 \check \omega_0=(8x-y)dx
+(11x+2y-1)dy,\ \ \check \omega_\infty=(5xy-4y^2+2y)dx+(x^2+10xy-x)dy,\ \
$$
and $\pi:\C^2\to \C^2: (x,y)\mapsto (x^2,y^2)$ then we have
$$
\pi^*\check \omega_0=2\omega_0,\ \ \pi^*\check \omega_\infty=2xy\omega_\infty,
$$
This shows that this example is birational to the case of a quasi-homogeneous singularity. 
We can use Katz-Grothendieck conjecture for vector fields in order to investigate 
whether a foliation by curves has a first integral or not. For this we have written the code {\tt BadPrV} which computes the bad and good primes of a vector field. For instance, before computing the first integral of $\F(\omega_0)$ by hand, we used this in order to be sure that such a foliation has a first integral. 

{\tiny
\begin{verbatim}
LIB "foliation.lib"; 
ring r=(0,t),  (x,y), dp;  
poly f=4*x^4+8*x^2*y^2+4*y^4-4*x^6-12*x^4*y^2-12*x^2*y^4-4*y^6-y^2; 
matrix P=transpose(MinFol(f,1)[1]); P;
poly l=(x^2+ y^2-1/3)^3-(36*x^2 + 9*y^2-4)*t; (P[1,1]*diff(l,y)-P[1,2]*diff(l,x))/l;
list vf=P[1,2], -P[1,1];  BadPrV(vf, 40); 
\end{verbatim}
}
\end{exam}

The 19th century has produced a lot of curves which are named after many engineers, astronomer and mathematicians. The website  {\tt mathcurve.com} contains a rather full list of such curves. Among all these curves $f=0$, those with $E_f$ generated by two elements, seem to be rare. 
The Lissajous and deltoid are among them, as shown  in Table \ref{30june2020}. 

\appendix

\renewcommand{\P}{{\mathbb P}}
\renewcommand{\H}{{\mathbb H}}
\newcommand{\T}{{\mathbb T}}
\newcommand{\ddd}{\textup{d}}
\newcommand{\EE}{{\mathcal E}}
\newcommand{\OO}{{\mathcal O}}
\newcommand{\XX}{{\mathcal X}}
\newcommand{\SSS}{{\mathcal S}}
\newcommand{\www}{\widetilde}
\newcommand{\paa}{\partial}
\newcommand{\Gr}{\textup{Gr}}
\newcommand{\id}{\textup{id}}
\newcommand{\imm}{\textup{im\,}}

\section{Nonexistence of a curve singularity $f$ where all Jordan blocks of the multiplication by $f$ in its Milnor
vector space have size $2$ (By Claus Hertling)}
\label{24/11/2020}
\bigskip
\begin{prop}
\label{27.11.2020}
 Let $f\in\C\{x,y\}$ be a holomorphic function germ with
$f(0)=0$ and with an isolated singularity at $0\in\C^2$. 
Let $A_f$ be the multiplication by $f$ in its
Milnor vector space $Q_f:=\C\{x,y\}/\textup{jacob}(f)$.
Then $A_f$ is nilpotent and all Jordan blocks of $A_f$
have size 1 or 2. Not all Jordan blocks of $A_f$ have size 2.
\end{prop}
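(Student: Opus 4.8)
\emph{Plan.} The statement splits into three assertions about the multiplication operator $A_f$ on the finite-dimensional local algebra $R:=Q_f=\C\{x,y\}/\jacob(f)$: that $A_f$ is nilpotent, that its Jordan blocks have size at most $2$, and that they are never \emph{all} of size $2$. I would treat them in this order and spend essentially all the effort on the last. For nilpotency: since the singularity is isolated, $\jacob(f)=\langle f_x,f_y\rangle$ has zero set $\{0\}$ as a germ, so $\sqrt{\jacob(f)}$ is the maximal ideal $\mathfrak m=\langle x,y\rangle$; as $f(0)=0$ forces $f\in\mathfrak m$, we get $f^N\in\jacob(f)$ and $A_f^N=0$. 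For the bound on block sizes: size $\le 2$ is exactly $A_f^2=0$, i.e.\ $f^2\in\jacob(f)$, which in two variables is the Brian\c{c}on--Skoda theorem ($f^n\in\jacob(f)$ in $n$ variables). Writing $b_1,b_2$ for the numbers of blocks of size $1$ and $2$, we have $\mu:=\dim_\C R=b_1+2b_2$, while the number of blocks $b_1+b_2=\dim_\C\kernel(A_f)=\dim_\C\mathrm{coker}(A_f)=\tau$ is the Tjurina number. Hence $b_1=2\tau-\mu$, and the final claim is equivalent to the strict inequality $\mu<2\tau$, i.e.\ to excluding $\image(A_f)=\kernel(A_f)$.

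The crux therefore requires ruling out the equality $\image(A_f)=\kernel(A_f)$, and here I would exploit the Gorenstein structure of $R$. By local duality $R$ carries the nondegenerate symmetric Grothendieck residue pairing $S$, and $A_f$ is \emph{self-adjoint} for $S$ since $S(fa,b)=S(a,fb)$. Self-adjointness gives $\image(A_f)^{\perp}=\kernel(A_f)$, and combined with $\image(A_f)\subseteq\kernel(A_f)$ (from $A_f^2=0$) this says $\image(A_f)$ is isotropic, so $\dim_\C\image(A_f)\le\mu/2$, i.e.\ $\mu\le 2\tau$, with equality precisely when $\image(A_f)=\kernel(A_f)$ is a Lagrangian subspace. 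Thus the all-size-$2$ case is exactly the case $\mathrm{Ann}_R(\bar f)=\bar f R$. (If $\bar f=0$ then $A_f=0$ and all blocks have size $1$, so by K.\ Saito's characterization of quasihomogeneity I may assume $\bar f\neq0$.)

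I would then extract algebraic consequences of $\mathrm{Ann}_R(\bar f)=\bar f R$ and look for a contradiction. Setting $K:=\bar f R=\image(A_f)$, the equality $\kernel(A_f)=K$ together with $K^2=\bar f^2 R=0$ makes $K$ a module over the Tjurina algebra $T_f:=R/K=\C\{x,y\}/\langle f,f_x,f_y\rangle$, and multiplication by $\bar f$ induces an isomorphism $R/\mathrm{Ann}(\bar f)\,\stackrel{\sim}{\to}\,K$ of $T_f$-modules; hence $K$ is free of rank one over $T_f$. On the other hand local duality gives $\mathrm{Ann}(\bar f)=\mathrm{Ann}(K)\cong T_f^{\vee}$, so the number of generators of $\mathrm{Ann}(\bar f)=K$, which is $1$, equals $\dim_\C\mathrm{soc}(T_f)$. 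Consequently the all-size-$2$ hypothesis forces the Tjurina algebra $T_f$ to be Gorenstein of type $1$ while $\bar f\neq0$, that is, while the singularity is non-quasihomogeneous.

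\textbf{The main obstacle.} What remains is to prove that the Tjurina algebra of a non-quasihomogeneous isolated plane curve singularity is never Gorenstein — equivalently, that $\langle f,f_x,f_y\rangle$ is never self-linked through the complete intersection $\langle f_x,f_y\rangle$, so that $\bigl(\jacob(f):f\bigr)=\langle f\rangle+\jacob(f)$ cannot hold for $\tau\ge 2$. I expect this to be the genuinely hard step. The route I would pursue uses that $R=\C\{x,y\}/\langle f_x,f_y\rangle$ is a two-dimensional \emph{complete intersection}: the situation presents $R$ as a square-zero extension $0\to K\to R\to T_f\to 0$ of $T_f$ by the free rank-one module $K\cong T_f$, and I would try to show that for $\tau\ge 2$ such an extension violates the vanishing of cotangent cohomology ($T^2(R)=0$) characteristic of complete intersections; making this precise, rather than merely for the trivial (idealization) extension, is the delicate point. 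A softer alternative, if one is willing to import it, is to invoke the inequality $\mu<2\tau$ known for isolated plane curve singularities, which excludes $\mu=2\tau$ outright; I would nonetheless prefer the intrinsic complete-intersection argument, since it keeps the proof self-contained and explains \emph{why} size $2$ can never be the common Jordan block size.
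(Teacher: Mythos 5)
Your treatment of the first two assertions is correct, and in fact more elementary than the paper's: nilpotency follows from $f\in\mathfrak{m}=\sqrt{\jacob(f)}$, and $A_f^2=0$ from the Brian\c{c}on--Skoda theorem $f^2\in\jacob(f)$ in two variables, whereas the paper obtains both from the $V$-filtration on $\Omega_f$ (the operator $t_{\Omega_f}$ raises the filtration by $1$ and the spectral numbers lie in $(-1,1)$). Your reduction of the third assertion is also correct: with $b_1,b_2$ the numbers of blocks of sizes $1,2$ one has $b_1=2\tau-\mu$, so ``all blocks of size $2$'' is exactly $\kernel(A_f)=\image(A_f)$, i.e.\ $\mathrm{Ann}_{Q_f}(\bar f)=\bar f\,Q_f$, i.e.\ $\mu=2\tau$, and via the residue pairing this forces the Tjurina algebra $T_f$ to be Gorenstein. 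But this is where your proof stops: the third assertion, which is the entire content of the appendix, is left as ``the main obstacle,'' with a speculative route and a fallback of citing $\mu<2\tau$ from the literature --- which, given the first two parts, is precisely the statement to be proved, and for which you give no reference or proof.

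Moreover, the intrinsic route you propose cannot work as described. Take $R=\C[x,y]/(x^2,y^2)$, which is the Milnor algebra of $f=(x^3+y^3)/3$: it is an Artinian complete intersection (so $T^2(R)=0$); it is a square-zero extension of the Gorenstein algebra $T=\C[x]/(x^2)$, with $\tau=2$, by the ideal $K=(y)$ --- indeed it is the trivial (idealization) extension of $T$ by itself; $K$ is free of rank one over $T$ and satisfies $\mathrm{Ann}_R(y)=(y)$; and multiplication by $y$ is self-adjoint for the residue pairing and has all Jordan blocks of size $2$. So every piece of structure you intend to contradict coexists consistently. The only thing that fails is that $y$ is not the class of a function whose Jacobian ideal is $(x^2,y^2)$; here $\bar f=0$. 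Hence any correct proof must use the specific coupling between the multiplying element $f$ and the ideal $\jacob(f)$ presenting $R$, not merely the Frobenius/complete-intersection structure of the Milnor algebra together with the extension data. That coupling is exactly what the paper's transcendental argument exploits: it is encoded in the Brieskorn lattice $H_0''$ and its $V$-filtration, and, assuming all blocks have size $2$, Hertling pairs each spectral number $\alpha_j$, $j\leq \mu/2$, with a spectral number $\beta_j\geq \alpha_j+1$, which forces the variance bound $\mu^{-1}\sum_j\alpha_j^2\geq \frac{1}{4}$; this contradicts Br\'elivet's theorem (the variance conjecture in dimension $2$), which gives $\mu^{-1}\sum_j\alpha_j^2\leq \frac{\alpha_\mu-\alpha_1}{12}<\frac{1}{6}$. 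Your reformulation is clean, but without an input of this analytic kind (or an independently proved inequality $\mu<2\tau$) it does not yield the proposition.
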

Let us call a singularity $f\in\C\{x,y\}$ such that
all Jordan blocks of $A_f$ have size 2 
{\it Jordan block extreme}. Proposition \ref{27.11.2020} says that
such singularities do not exist. 
If they would exist, one could allow in Theorem \ref{main1} that either
all singularities on $C$ are quasi-homogeneous or that
all singularities on $C$ are Jordan block extreme. 
This follows from Theorem \ref{5feb07}. 

The nonexistence of Jordan block extreme singularities 
is not really surprising. But it is also not trivial.
The following proof uses the Gauss-Manin connection,
the spectral numbers and the fact that Hertling's 
variance conjecture on the spectral numbers is true 
in the case of curve singularities \cite{Br04}.
The idea is to show that a Jordan block extreme curve 
singularity would have spectral numbers which violate 
the variance conjecture. 

\bigskip
{\bf Proof of Proposition \ref{27.11.2020}:}
First we have to review the spectral numbers and their 
background. For this, we start with an arbitrary 
holomorphic function germ 
$f\in\OO_{\C^{n+1},0}=\C\{x_0,...,x_n\}$ 
with $f(0)=0$ and with an isolated singularity at 0,
here $n\geq 1$. Two references in book form for the
following are \cite{arn} and \cite{her}. 

Choose a good representative $\tilde{f}:X\to\Delta$,
with $B_\varepsilon\subset\C^{n+1}$ a small ball around 0,
$\Delta=\Delta_\delta\subset \C$ a very small disk around 0 
(so $0<\delta\ll\varepsilon\ll 1$) and
$X:=B_{\varepsilon}\cap f^{-1}(\Delta)$.
Consider its {\it cohomology bundle}
$\bigcup_{t\in\Delta^*}H^n(\tilde{f}^{-1}(t),\C)$, 
here $\Delta^*:=\Delta-\{0\}$. It is a flat vector bundle
of rank $\mu\in\N$.
The covariant derivative on it with respect to the coordinate
vector field of the coordinate $t$ on $\Delta$ 
is called $\paa_t$. We define
\begin{eqnarray*}
Q_f&:=& \OO_{\C^{n+1},0}/(\textup{jacob}(f))
\quad\textup{Milnor vector space},\\
\Omega_f&:=& \Omega^{n+1}_{\C^{n+1},0}/
df\land \Omega^n_{\C^{n+1},0},\\
H_0''&:=& \Omega^{n+1}_{\C^{n+1},0}/
df\land d\Omega^{n-1}_{\C^{n+1},0}
\quad\textup{Brieskorn lattice},\\
H_0'&:=& df\land \Omega^n_{\C^{n+1},0}/
df\land d\Omega^{n-1}_{\C^{n+1},0},
\end{eqnarray*}
\begin{eqnarray*}
V^{>-\infty}&:=&\{\textup{germs at 0 of sections in the
cohomology bundle of moderate growth}\},\\
C^\alpha&:=&\{\sigma\in V^{>-\infty}\,|\, 
(t\partial_t-\alpha)^{n+1}(\sigma)=0\},\\
V^\alpha&:=& \bigoplus_{\beta\in [\alpha,\alpha+1)}
\C\{t\}\cdot C^\beta,\\
V^{>\alpha}&:=& \bigoplus_{\beta\in (\alpha,\alpha+1]}
\C\{t\}\cdot C^\beta.
\end{eqnarray*}
$Q_f$ and $\Omega_f$ are $\C$-vector spaces of dimension $\mu$.
$V^{>-\infty}$ is $\C\{t\}[t^{-1}]$-vector space of 
dimension $\mu$. It contains the spaces $C^\alpha$,
$V^\alpha$, $V^{>\alpha}$, $H_0''$ and $H_0'$.  
For any $\alpha\in\R$
$$V^{>-\infty}=\bigoplus_{\beta\in[\alpha,\alpha+1)}
\C\{t\}[t^{-1}]C^\beta,$$
$V^\alpha$, $V^{>\alpha}$, $H_0''$ and
$H_0'$ are free $\C\{t\}$-modules of rank $\mu$.
And $V^\alpha$ for $\alpha>-1$, $V^{>\alpha}$ for 
$\alpha\geq-1$, $H_0''$ and $H_0'$ are free
$\C\{\{\paa_t^{-1}\}\}$-modules of rank $\mu$
($\C\{\{\paa_t^{-1}\}\}$ is the ring of power series of
Gevrey class 1 in $\paa_t^{-1}$.) The Brieskorn lattice
$H_0''$ satisfies 
$$V^{>-1}\supset H_0''\supset V^{n-1}.$$
We have the $\C$-vector space isomorphisms
\begin{eqnarray*}
t:V^\alpha&\to& V^{\alpha+1},\\
t:V^{>\alpha}&\to& V^{>\alpha+1},\\
\paa_t^{-1}:V^\alpha&\to& V^{\alpha+1}\quad\textup{for }
\alpha>-1,\\
\paa_t^{-1}:V^{>\alpha}&\to& V^{>\alpha+1}\quad\textup{for }
\alpha\geq -1,\\
\paa_t^{-1}:H_0''&\to& H_0',\\
\Omega_f&\cong& H_0''/H_0',\\
C^\alpha&\cong& V^\alpha/V^{>\alpha}=:
\textup{Gr}_V^\alpha V^{>-\infty}.
\end{eqnarray*}

The space $\Omega_f$ inherits from $H_0''\subset V^{>-1}$
a $V$-filtration, as follows, 
\begin{eqnarray*}
V^\alpha\Omega_f:= \frac{V^\alpha\cap H_0''+H_0'}{H_0'}
\subset \frac{H_0''}{H_0'}=\Omega_f,
\end{eqnarray*}
with quotients
\begin{eqnarray*}
\Gr_V^\alpha\Omega_f = 
\frac{V^\alpha\Omega_f}{V^{>\alpha}\Omega_f}
\cong \frac{\Gr_V^\alpha H_0''}{\Gr_V^\alpha H_0'}
\cong\frac{H_0''\cap V^\alpha+V^{>\alpha}}
{H_0'\cap V^\alpha + V^{>\alpha}}.
\end{eqnarray*}
The spectral numbers of the singularity $f$ 
are the $\mu$ rational numbers
$\alpha_1,...,\alpha_\mu$ with 
\begin{eqnarray*}
\alpha_1\leq ...\leq\alpha_\mu\quad \textup{and}\quad
\dim \Gr_V^\alpha\Omega_f =\sharp\{j\in\{1,....,\mu\}\,|\,
\alpha_j=\alpha\}.
\end{eqnarray*}
The inclusions $V^{>-1}\supset H_0''\supset V^{n-1}$
and the isomorphism $\Omega_f=H_0''/\paa_t^{-1}H_0''$
show
$$-1<\alpha_1\leq ...\leq \alpha_\mu <n.$$
Also the symmetry 
$$\alpha_j+\alpha_{\mu+1-j}=n-1, \quad 
\textup{or, equivalenty,}\quad \dim\Gr_V^\alpha\Omega_f
=\dim\Gr_V^{n-1-\alpha}\Omega_f$$
holds (one proof of it uses Varchenko's description via
$H_0''$ of Steenbrink's mixed Hodge structure,
another one uses K. Saito's higher residue pairings).
The following {\it variance inequality} for the spectral
numbers was conjectured by Hertling \cite{her},
$$\mu^{-1}\sum_{j=1}^\mu (\alpha_j-\frac{n-1}{2})^2
\leq \frac{\alpha_\mu-\alpha_1}{12}.$$
For curve singularities (i.e. $n=1$), it was proved by 
Br\'elivet \cite{Br04}. 

The action of $t$ on $V^{>-\infty}$ induces an action
on $\Omega_f$ which is called $t_{\Omega_f}$. 
It satisfies
$$t_{\Omega_f}:V^\alpha\Omega_f\to V^{\alpha+1}\Omega_f.$$
Therefore $t_{\Omega_f}$ is nilpotent. Because of 
$V^{\alpha_1}\Omega_f=\Omega_f$ and $V^{>\alpha_\mu}\Omega_f=0$
and $\alpha_1+(n+1)>\alpha_\mu$, $t_{\Omega_f}$
has at most Jordan blocks of size $n+1$. 

The $\mu$-dimensional $\C$-vector spaces $Q_f$ and
$\Omega_f$ are not canonically isomorphic.
But the choice of any volume form 
$u(x){\rm d}x_0...{\rm d}x_n=u(x){\rm d}x$ 
(volume form: $u(0)\neq 0$, i.e. $u(x)\in\OO^*_{\C^{n+1},0}$)
induces an isomorphism
$$Q_f\to\Omega_f,\quad g\mapsto g\cdot [u(x){\rm d}x],$$
which commutes with the multiplication by $f$ respectively $t$,
\begin{eqnarray*}
\begin{matrix}
Q_f & \stackrel{\cong}{\to} & \Omega_f & 
g\mapsto g\cdot [u(x){\rm d}x]\\
 \downarrow A_f & & \downarrow t_{\Omega_f} & \\
Q_f & \stackrel{\cong}{\to} & \Omega_f & 
g\mapsto g\cdot [u(x){\rm d}x]
\end{matrix}
\end{eqnarray*}
Therefore $A_f$ and $t_{\Omega_f}$ have the same Jordan block
structure. Now the review of the spectral numbers and their
background is finished. 

Now we suppose $n=1$, so $f$ is a curve singularity,
and we suppose that $f$ is Jordan block extreme,
i.e. all Jordan blocks of $A_f$ and $t_{\Omega_f}$
have size 2. We will come to a contradiction.
$\mu$ is even. We have
$$V^{>\alpha_\mu-1}\Omega_f\subset \ker t_{\Omega_f} 
=\imm t_{\Omega_f} = t_{\Omega_f}(\Omega_f)
=t_{\Omega_f}(V^{\alpha_1}\Omega_f)
\subset V^{\alpha_1+1}\Omega_f.$$
Together with $\alpha_1+1>0>\alpha_\mu-1$, so 
$V^{>\alpha_\mu-1}\Omega_f\supset 
V^0\Omega_f\supset V^{\alpha_1+1}\Omega_f$,
this shows
$$V^{>\alpha_\mu-1}\Omega_f= \ker t_{\Omega_f} 
= \imm t_{\Omega_f}= V^{\alpha_1+1}\Omega_f$$
and
$$\{\alpha_1,...,\alpha_{\mu/2}\}\subset 
[\alpha_1,\alpha_\mu-1],\quad
\{\alpha_{\mu/2+1},...,\alpha_\mu\}\subset
[\alpha_1+1,...,\alpha_\mu].$$
This distribution of the spectral numbers is already strange.
But for a contradiction to the variance inequality, we
need more. 

For any element $v\in\Omega_f-\{0\}$ denote by 
$\gamma(v)\in\{\alpha_1,...,\alpha_\mu\}$ the unique
number $\gamma$ such that 
$v\in V^\gamma\Omega_f-V^{>\gamma}\Omega_f$,
and then denote by 
$\Gr_V^{\gamma(v)}v\in \Gr_V^{\gamma(v)}\Omega_f-\{0\}$
the class of $v$ in $\Gr_V^{\gamma(v)}\Omega_f$.

We choose elements $v_1,...,v_{\mu/2}\in\Omega_f$
such that $\gamma(v_j)=\alpha_j$ and such that the classes 
$\Gr_V^{\alpha_1}v_1,...,\Gr_V^{\alpha_{\mu/2}}v_{\mu/2}$
form a $\C$-vector space basis of the sum of quotients
$\sum_{\alpha\in\{\alpha_1,...,\alpha_{\mu/2}\}}
Gr_V^\alpha\Omega_f$. 
Then any linear combination $v=\sum_{j=1}^{\mu/2}\lambda_jv_j$
with $(\lambda_1,...,\lambda_{\mu/2}) \neq 0$ is nonzero and has 
$\gamma(v)\in\{\alpha_1,...,\alpha_{\mu/2}\}$, so it is 
not in $V^{>\alpha_\mu-1}=\ker t_{\Omega_f}$. 
Therefore the vector space
$\sum_{j=1}^{\mu/2}\C\cdot v_j$ has dimension $\mu/2$, and
$$\Omega_f = \Bigl(\bigoplus_{j=1}^{\mu/2}\C\cdot v_j\Bigr)
\oplus \ker t_{\Omega_f}.$$
Recall $\ker t_{\Omega_f}=\imm t_{\Omega_f}$ and that this
subspace of $\Omega_f$ has dimension $\mu/2$. Therefore
$$\ker t_{\Omega_f}=\imm t_{\Omega_f}=
\bigoplus_{j=1}^{\mu/2}\C\cdot t_{\Omega_f}(v_j).$$
Obviously $\gamma(t_{\Omega_f}(v_j))\geq \alpha_j+1$,
but equality does not necessarily hold, and the classes 
$\Gr_V^{\gamma(t_{\Omega_f}(v_j))}t_{\Omega_f}(v_j)$ for
$j\in\{1,...,\mu/2\}$ are not necessarily linearly independent.

The following claim replaces the basis 
$t_{\Omega_f}(v_1),...,t_{\Omega_f}(v_{\mu/2})$ of 
$\imm t_{\Omega_f}$ by a basis $w_1,...,w_{\mu/2}$ of
$\imm t_{\Omega_f}$ which fits better to the spectral numbers
$\alpha_{\mu/2+1},...\alpha_\mu$. 

\medskip
{\bf Claim:} (a) There is a lower triangular matrix
$(a_{ij})\in M_{\mu/2\times \mu/2}(\C)$ with $a_{ii}=1$
such that the basis 
$$(w_1,...,w_{\mu/2})=(t_{\Omega_f}(v_1),...,
t_{\Omega_f}(v_{\mu/2}))\cdot (a_{ij})$$ 
of $\imm t_{\Omega_f}$ satisfies the
following: Write $\beta_j:=\gamma(w_j)$. The classes 
$\Gr_V^{\beta_1}w_1,...,\Gr_V^{\beta_{\mu/2}}w_{\mu/2}$
are linearly independent.

(b) Therefore they form a basis of 
$\bigoplus_{\alpha\in\{\alpha_{\mu/2+1},...,\alpha_\mu\}}
\Gr_V^\alpha\Omega_f$, and therefore there is a bijection
$\sigma:\{1,...,\mu/2\}\to\{\mu/2+1,...,\mu\}$
with $\alpha_{\sigma(j)}=\beta_j$ for $j\in\{1,...,\mu/2\}$.

(c) $\beta_j\geq \alpha_j+1$ for $j\in\{1,...,\mu/2\}$. 

\medskip
{\bf Proof of the Claim:}
(a) The vectors $w_j$ are constructed inductively in the
order $w_{\mu/2},w_{\mu/2-1},...,w_1$. The first step
$w_{\mu/2}=t_{\Omega_f}(v_{\mu/2})$ is trivial.
Suppose that the vectors $w_{j+1},...,w_{\mu/2}$
(and the corresponding entries $a_{im}$) have been constructed.
One constructs $w_j$ be a sequence of steps which give
$w_j^{(0)}:=t_{\Omega_f}(v_j),w_j^{(1)},...,w_j^{(k)}=w_j$
for some $k\geq 0$. Each of these vectors is in
$t_{\Omega_f}(v_j)+\bigoplus_{i\geq j+1}\C\cdot w_i$. 
Suppose that $w_j^{(l)}$ has been constructed. If 
$$Gr_V^{\gamma(w_j^{(l)})}w_j^{(l)}\notin 
\bigoplus_{i:\, i\geq j+1,\beta_i=\gamma(w_j^{(l)})}
\C\cdot \Gr_V^{\beta_i}w_i,$$
then $l=k$ and $w_j^{(l)}=w_j$. If 
$$Gr_V^{\gamma(w_j^{(l)})}w_j^{(l)}\in 
\bigoplus_{i:\, i\geq j+1,\beta_i=\gamma(w_j^{(l)})}
\C\cdot \Gr_V^{\beta_i}w_i,$$
one adds to $w_j^{(l)}$ a suitable linear combination
$\sum_{i:\, i\geq j+1,\beta_i=\gamma(w_j^{(l)})}
a_{ij}\cdot w_i$, such that the sum $w_j^{(l+1)}$
satisfies $\gamma(w_j^{(l+1)})>\gamma(w_j^{(l)})$.
This construction stops at some $w_j^{(k)}=w_j$.

(b) This follows immediately from part (a).

(c) The construction in the proof of part (a) gives
$$\beta_j=\gamma(w_j)=\gamma(w_j^{(k)})>..>\gamma(w_j^{(0)})
=\gamma(t_{\Omega_f}(v_j))\geq \alpha_j+1$$
(in the case $k=0$ without the strict inequalities).
This finishes the proof of the Claim.
\hfill ($\Box$)

\medskip
Now we can estimate the variance of the spectral numbers.
It is 
\begin{eqnarray*}
&&\mu^{-1}\sum_{j=1}^\mu\alpha_j^2
=\mu^{-1}\sum_{j=1}^{\mu/2}(\alpha_j^2+\beta_j^2)
=\mu^{-1}\sum_{j=1}^{\mu/2}
\Bigl((\alpha_j+\frac{1}{2}-\frac{1}{2})^2 + 
(\beta_j-\frac{1}{2}+\frac{1}{2})^2\Bigr)\\
&=&\mu^{-1}\sum_{j=1}^{\mu/2}
\Bigl((\alpha_j+\frac{1}{2})^2 + (\alpha_j+\frac{1}{2})(-1)
+\frac{1}{4} +(\beta_j-\frac{1}{2})^2 + 
(\beta_j-\frac{1}{2})\cdot 1 + \frac{1}{4}\Bigr)\\
&=&\mu^{-1}\sum_{j=1}^{\mu/2}
\Bigl(\frac{1}{2}+ (\alpha_j+\frac{1}{2})^2
+(\beta_j-\frac{1}{2})^2
+(\beta_j-\alpha_j-1)\Bigr)\\
&\geq & \mu^{-1}\sum_{j=1}^{\mu/2}\frac{1}{2}
=\frac{1}{4}\qquad 
\textup{(here part (c) of the claim is used).}
\end{eqnarray*}
This does not fit to the variance inequality
for curve singularities \cite{Br04},
$$\mu^{-1}\sum_{j=1}^\mu\alpha_j^2\leq
\frac{\alpha_\mu-\alpha_1}{12}< \frac{2}{12}=\frac{1}{6}.$$
We arrive at a contradiction. A Jordan block extreme
curve singularity does not exist.
\hfill$\Box$



\def\cprime{$'$} \def\cprime{$'$} \def\cprime{$'$} \def\cprime{$'$}

\end{document}